\def\NAT@def@citea{\def\@citea{\NAT@separator}}
\theoremstyle{plain}
\newtheorem{theorem}{Theorem}[section]
\newtheorem{lemma}[theorem]{Lemma}
\newtheorem{assumption}[theorem]{Assumption}
\newtheorem{corrolary}[theorem]{Corollary}
\theoremstyle{definition}
\newtheorem{definition}[theorem]{Definition}
\theoremstyle{remark}
\newtheorem{remark}{Remark}
\newcommand{\rset}{\mathbb{R}}
\providecommand{\norm}[1]{\lVert#1\rVert}
\begin{document}
	
	
	\title{New nonasymptotic convergence rates of stochastic proximal point algorithm for stochastic convex optimization}

	\author{
		\name{Andrei P\u atra\c scu  \thanks{CONTACT Andrei P\u atra\c scu. Email: andrei.patrascu@fmi.unibuc.ro}} 
		\affil{ University Bucharest, Str. Academiei 14, 010014, Bucharest, Romania }
	}
	
	\maketitle
	

\begin{abstract}

\noindent Large sectors of the recent optimization literature focused in the last decade on the development of optimal stochastic first order schemes for constrained convex models under progressively relaxed assumptions. Stochastic proximal point is an iterative scheme born from the adaptation of proximal point algorithm to noisy stochastic optimization, with a resulting iteration related to stochastic alternating projections.
Inspired by the scalability of alternating projection methods,  we start from the (linear) regularity assumption, typically used in convex feasiblity problems to guarantee the linear convergence of stochastic alternating projection methods, and analyze a general weak linear regularity condition which facilitates convergence rate boosts in stochastic proximal point schemes. Our applications include many non-strongly convex functions classes often used in machine learning and statistics.
Moreover, under weak linear regularity assumption we guarantee  $\mathcal{O}\left(\frac{1}{k}\right)$ convergence rate for SPP, in terms of the distance to the optimal set, using only projections onto a simple component set. Linear convergence is obtained for interpolation setting, when the optimal set of the expected cost is included into the optimal sets of each functional component.
\end{abstract}

\begin{keywords}%
Stochastic proximal point, stochastic alternating projections, quadratic growth, nonsmooth optimization, linear convergence, sublinear convergence rate
\end{keywords}

\section{Introduction}

\noindent Many large-scale modern applications \cite{ChoBar:04,Her:09,HerChe:08} are often modeled by a convex feasiblity problem (CFP):
\begin{equation*}
 \text{find} \quad x \in \bigcap_{\xi \in \Omega} X_{\xi}
\end{equation*}
where $\{X_{\xi}\}_{\xi \in \Omega}$ are simple closed convex sets. 
There exist plenty of iterative algorithms which efficiently solve CFPs under various regularity conditions on the feasible sets, from which we mention only the stochastic alternating projections (SAP) schemes due to their relevance to our paper, see \cite{GubPol:67,Ned:10,CenChe:12,NecRic:18}. Even in their simplest form, based on individual projections onto randomly chosen simple sets $X_{\xi}$,
stochastic alternating projections attain linear convergence and high performance due to their computationally cheap iteration which do not scale with the number of sets \cite{Ned:10,CenChe:12,NecRic:18}.

\noindent It has been shown in \cite{NecRic:18} that any CFP can be put in the following form:
\begin{equation*}
\min\limits_{x \in \rset^n} \;\;  \mathbb{E}_{\xi \in \Omega}[\mathbb{I}_{X_{\xi}}(x)],
\end{equation*}
where $\mathbb{I}_{X_{\xi}}(x)$
is the indicator function of set $X_{\xi}$, defined by values $0$ for $x \in X_{\xi}$ and $+\infty$ otherwise, and the expectation operator $\mathbb{E}$  over a uniform distributed random variable $\xi$. Stochastic alternating projections samples randomly an index $\xi_k$ and generate the sequence:
\begin{align*}
x^{k+1} :=\arg\min_z \; \mathbb{I}_{X_{\xi_k}}(z) + \frac{1}{2}\norm{z-x^k}^2 \quad \left(= \pi_{X_{\xi_k}}(x^k)\right).
\end{align*}
Sublinear rates are easily obtained for mild convex sets \cite{GubPol:67,Ned:10,NecRic:18}. For faster convergence, SAP requires better conditioned structure. For instance, 
it has been shown in \cite{Ned:10,NecRic:18} that under \emph{linear regularity property}, characteristic in particular to polyhedral sets,  SAP exhibits linear convergence. Inspired by these projection  favorable landscapes, in this paper we aim to analyze further extensions of regularity properties to general convex functions, and approach the following stochastic optimization problem:
\begin{align}
\label{problem_intro}
\min\limits_{x \in \rset^n} & \;\;  F(x) := \left(\mathbb{E} [F(x;\xi)]\right),
\end{align}
where each component $F(\cdot, \xi): \rset^n \mapsto (-\infty, + \infty]$ is proper convex and lower-semicontinuous. The natural extension of the proximal iteration of an indicator function, involved in the SAP algorithm, towards more general proximal operators of convex functions leads to the stochastic proximal point (SPP) scheme for \eqref{problem_intro} \cite{RyuBoy:16,TouTra:16,PatNec:18}. 
Thus, given the smoothing parameter sequence $\{\mu_k\}_{k \ge 0}$, the vanilla SPP iteration has the form 
\begin{align*}
 x^{k+1} : = \arg\min\limits_{z} F(z;\xi_k) + \frac{1}{2\mu_k}\norm{z - x^k}^2 ,
\end{align*}
Recently its convergence behavior has been analyzed under various assumptions and several advantages have been theoretically and empirically illustrated over SGD algorithms \cite{Bia:16,RyuBoy:16,TouTra:16,PatNec:18}. However, inspired by two facts: the boost from sublinear to linear rate of SAP under linear regularity and by the connection between SPP and SAP, we change the perspective adopted in previous references and address the following natural questions:

\vspace{3pt}

\noindent \emph{
Does the generalization of linear regularity guarantee convergence rate boost of SPP? Which practical models satisfy this generalized regularity?}

\vspace{3pt}

\noindent  In our paper we answer these questions by finding that a somehow straight generalization of  linear regularity improves the iteration complexity of SPP by one order of magnitude, while particularly maintaining linear convergence for linearly regular CFPs. The main contributions of this paper are:

\noindent $(i)$ We offer a unified theoretical perspective over SPP and SAP schemes, using a unified convergence rate analysis based on simple novel arguments. Up to our knowledge, this is the first unified analysis providing complexity results for stochastic proximal point and stochastic alternating projections. 

\noindent $(ii)$ We provide sublinear/linear convergence rates for SPP scheme on constrained convex optimization. The key structural assumption allowing this general result is the weak linear regularity property, a natural generalization of classical linear regularity of convex sets.

\noindent $(iii)$ Our analysis applies to constrained optimization with complicated constraints (see Section \ref{sec_funcclass}). In our analysis SPP requires only simple projections onto individual sets, unlike most projected stochastic first order schemes that use computationally expensive full projections onto the entire feasible set. 

\noindent $(iv)$ The new proof techniques based on the weak linear regularity property are simpler than previous approaches. We show a sublinear $\mathcal{O}\left( \frac{1}{k} \right)$ convergence rate  for the stochastic proximal point algorithm in terms of the distance from the optimal set. Moreover, in the interpolation case when the functional components share minimizers, linear convergence is obtained.

\subsection{Related work}
Significant parts of the tremendous literature on stochastic optimization algorithms focused on the theoretical and practical behavior of stochastic first order schemes under different convexity properties, 
see \cite{RamSin:13, YanLin:18,    Bia:16,NemJud:09,MouBac:11,RakSha:12,HazSat:14,ShaSin:11,NguNgu:18,RosVil:14,XuLin:17}. Due to its simplicity, the traditional method of choice for most supervised machine learning problems is the SGD algorithm. 
On one hand, most works on constrained stochastic optimization problem $\min\limits_{x \in X} F(x) \left(:= \mathbb{E}[f(x;\xi)]\right) $ develop SGD-type schemes involving projections onto feasible sets, which could bring   a significant computational burden for complicated sets. 
On the other hand, a great proportion of previous work assume bounded gradients of the objective function $F$, which do not hold for smooth quadratically growing functions such as the linear regression cost.
However, our analysis of the SPP scheme alows us to overcome these limitations. 

\noindent The stochastic proximal point algorithm has been recently analyzed using various differentiability assumptions, see \cite{TouTra:16,RyuBoy:16,Bia:16,PatNec:18,KosNed:13,WanBer:16}.
In \cite{TouTra:16} is considered the typical  stochastic learning model involving the expectation of random particular components $f(x;\xi)$ defined by the composition of a smooth function and a linear operator, i.e.: 
$  f(x;\xi) = \ell(a^T_{\xi} x), $
where $a_\xi \in \rset^n$.  The complexity analysis requires the linear composition form, i.e. $\ell(a^T_{\xi} x)$, and that the objective function $\mathbb{E}[\ell(a^T_{\xi} x)]$ to be smooth and strongly convex. The nonasymptotic convergence of the
SPP with decreasing stepsize $\mu_k=\frac{\mu_0}{k^{\gamma}}$, with
$\gamma \in (1/2, 1]$, has been analyzed in the quadratic mean and
an $\mathcal{O}\left(\frac{1}{k^{\gamma}} \right)$  convergence rate has been derived.
The generalization of these convergence guarantees is undertaken in \cite{PatNec:18}, where no linear composition structure is required and an (in)finite number of constraints are included in the stochastic model. However, the stochastic model from \cite{PatNec:18} requires strong convexity and Lipschitz gradient continuity for each functional component $f(\cdot;\xi)$. Furthermore, it is explicitly specified that their analysis do not extend to certain models, such as those with nonsmooth functional components $\hat{f}(x;\xi) := f(x;\xi) + \mathbb{I}_{X_{\xi}}(x)$, where $f(\cdot;\xi)$ is smooth and convex. Note that our analysis surpasses these restrictions and provides a natural generalization of \cite{PatNec:18} to nonsmooth constrained models.

\noindent In  \cite{RyuBoy:16}, the SPP scheme with decreasing stepsize $\mu_k=\frac{\mu_0}{k}$ has been applied to problems with the objective function having Lipschitz continuous gradient and the restricted strong convexity property, and its asymptotic global convergence  is derived. A sublinear $\mathcal{O}\left( \frac{1}{k}\right)$ asymptotic  convergence rate in the quadratic mean has been given.
In this paper we make more general assumptions on the objective function, which hold for restricted strongly convex functions, and provide nonasymptotic convergence analysis of the SPP for a more general stepsize $\mu_k=\frac{\mu_0}{k^\gamma}$, with $\gamma >0$. Further, in  \cite{Bia:16} a  general asymptotic convergence analysis of slightly modified SPP scheme has been provided, under mild convexity assumptions on a finitely constrained stochastic problem. 
Although this scheme is very similar to the  SPP algorithm, only the almost sure asymptotic convergence has been provided in  \cite{Bia:16}. 

\noindent Recently, in \cite{AsiDuc:18}, the authors analyze SPP schemes for shared minimizers stochastic optimization obtaining linear convergence results, for variable stepsize SPP. Also they obtain $\mathcal{O}\left( \frac{1}{k^{1/2}} \right)$ for convex Lipschitz continuous objectives and, furthermore, $\mathcal{O}\left( \frac{1}{k} \right)$ for strongly convex functions. Remarkably, they eliminate any continuity assumption for the sublinear rate in the strongly convex case, which allows indicator functions. However, our analysis uses non-trivially the linear regularity of feasible set for obtaining better convergence constants. Moreover, we use quadratic growth relaxations of strong convexity assumption which allow a unified treatment of SPP and AP schemes.


\noindent \textbf{Notations}. We use notation $[m] =\{1,\cdots, m\}$. For $x,y \in \rset^n$ denote the scalar product  $\langle x,y \rangle = x^T y$ and Euclidean norm by $\|x\|=\sqrt{x^T x}$. The projection operator onto set $X$ is denoted by $\pi_X$ and the distance from $x$ to the set $X$ is denoted $\text{dist}_X(x) = \min_{z \in X} \norm{x-z}$. 
For function $f(\cdot;\xi)$, $\text{dom}(f)$ is the effective domain and we use notations $\partial f(x;\xi)$ for the subdifferential set at $x$  and $g_f(x;\xi)$ for a subgradient of $f$ at $x$. If $f(\cdot;\xi)$ is differentiable we use the gradient notation $\nabla f(\cdot;\xi)$. Also we use $g_{X}(x,\xi) \in \mathcal{N}_{X_{\xi}}(x)$ for a subgradient of $\mathbb{I}_{X_{\xi}}(x)$. Finally, we  define the function $\varphi_{\alpha}: (0, \infty) \to
\rset^{}$ as:
$  \varphi_{\alpha}(x) =
\begin{cases}
(x^{\alpha} - 1)/\alpha, & \text{if} \; \alpha \neq 0 \\
\log(x), & \text{if} \; \alpha = 0.
\end{cases}$


\subsection{Problem formulation}

\noindent  We consider the following main stochastic minimization:
\begin{align}
\label{problem}
F^* = & \min_{x \in \rset^n}  F(x) \;\; \left( := \mathbb{E}[F(x ; \xi) ] \right) 
\end{align}
where $F(\cdot;\xi): \rset^n \to (-\infty, + \infty] $ are proper convex  and lower-semicontinuous functions. 
The random variable $\xi$ has its associated probability space $(\Omega, \mathbb{P})$. 
When the functional components include indicator functions then \eqref{problem} covers constrained models :
\begin{align*}
\min\limits_{x \in \rset^n}  \;  \mathbb{E}_{\xi \in \Omega_1}[f(x;\xi)] \qquad \text{s.t.} \; \bigcap_{\xi \in \Omega_2} X_{\xi},
\end{align*}
where $\Omega = \Omega_1 \cup \Omega_2$.
Denote the optimal set with $X^*$ and $x^*$ some optimal point for \eqref{problem}.
\begin{assumption} \label{assump_basic}
Assume that the central problem \eqref{problem} satisfies:

\noindent $(i)$ The optimal set $X^*$ is nonempty.

\noindent $(ii)$ There exists subgradient mapping $g_F: \rset^n \times \Omega \mapsto \rset^n$ such that $g_F(x;\xi) \in \partial F(x;\xi), \forall \xi \in \Omega$ and $\mathbb{E}[g_F(x;\xi)] \in \partial F(x).$ 

\noindent $(iii)$  $F(\cdot;\xi)$ has bounded gradients on the optimal set: there exists $\mathcal{S}^*_F \ge 0$ such that $\mathbb{E}\left[\norm{g_F(x^*;\xi)}^2\right]  \le \mathcal{S}^*_F  < \infty$ for all $x^* \in X^*$;

\noindent $(iv)$ For any $g_F(x^*) \in \partial F(x^*)$ there exists bounded subgradients $g_F(x^*;\xi) \in \partial F(x^*;\xi),$ such that $ \mathbb{E}[g_F(x^*;\xi)] = g_F(x^*)$ and $\mathbb{E}[\norm{g_F(x^*;\xi)}^2]<\mathcal{S}^*_F$. Moreover, for simplicity we assume throughout the paper $g_F(x^*) = \mathbb{E}[g_F(x^*;\xi)] = 0$
\end{assumption}

\noindent The first part of the above assumption is natural in the stochastic optimization problems. The Assumption $(ii)$ guarantee the existence of a subgradient mapping. The third part Assumption $(iii)$ is standard in the literature related to stochastic algorithms.

\begin{remark}
The assumption $(iv)$ needs a more consistent discussion. Denote $\mathbb{E}[\partial F(\cdot;\xi)] = \left\{\mathbb{E}[g_F(\cdot;\xi)] \;|\; g_F(\cdot;\xi) \in \partial F(\cdot;\xi)  \right\}$. In general, for convex functions it can be easily shown $\mathbb{E}\left[ \partial F(x;\xi)\right] \subseteq \partial F(x)$ for all $x \in \text{dom}(F)$ (see \cite{RocWet:82}). However, $(iv)$ is guaranteed by the stronger equality
\begin{align}\label{diff_equality}
\mathbb{E}\left[ \partial F(x;\xi)\right] = \partial F(x).
\end{align} 

\noindent \textbf{Discrete case.} Let us consider finite discrete domains $\Omega = \{1, \cdots, m\}$. Then \cite[Theorem 23.8]{Roc:70} guarantees that the finite sum  objective of \eqref{problem_intro} satisfy \eqref{diff_equality} if $ \bigcap\limits_{\xi \in \Omega}\text{ri}(\text{dom}(f(\cdot;\xi))) \neq \emptyset$.
The $\text{ri}(\text{dom}(\cdot))$ can be relaxed to $\text{dom}(\cdot)$ for polyhedral components. 
In particular,  let $X_1, \cdots, X_m$ be finitely many closed convex satisfying qualification condition: $\bigcap\limits_{i=1}^m \text{ri}(X_i) \neq \emptyset$, then also \eqref{diff_equality} holds, i.e. $\mathcal{N}_{X}(x) = \sum\limits_{i=1}^m \mathcal{N}_{X_i}(x)$ (see ( by \cite[Corrolary 23.8.1]{Roc:70})). Again, $\text{ri}(X_i)$ can be relaxed to the set itself for polyhedral sets. 
As pointed by \cite{BauBor:99}, the (bounded) linear regularity property of $\{X_i\}_{i=1}^m$ implies the intersection qualification condition. 

Under support of these arguments, observe that $(iv)$ can be easily checked for our finite-sum examples given below in the rest of our sections.

\noindent \textbf{Continuous case.}
In the nondiscrete case, sufficient conditions for \eqref{diff_equality} are discussed in \cite{RocWet:82}. Based on the arguments from \cite{RocWet:82}, an assumption equivalent to $(iv)$ is considered in \cite{SalBia:17} under the name of $2-$integrable representation of $x^*$ (definition in \cite[Section B]{SalBia:17}). On short, if $g(\cdot;\xi)$ is normal convex integrand with full domain then $x^*$  admits an $2-$integrable representation $\{g_F(x^*;\xi)\}_{\xi \in \Omega}$, and implicitly $(iv)$ holds. 

Lastly, we mention that deriving a more complicated result similar to Lemma \ref{lemma_auxres} we could avoid assumption $(iv)$. However, since $(iv)$ facilitates the simplicity and naturality of our results and while our target applications are not excluded, we assume throughout the paper that $(iv)$ holds. 
\end{remark}
\noindent We use  the approximation of the functions $F(\cdot;\xi)$ given by their Moreau envelope, that is:
$ F_{\mu} (x;\xi) := \min\limits_{z \in \rset^n}  \;  F(z;\xi) + \frac{1}{2\mu} \norm{z - x}^2$
with smoothing parameter $\mu>0$. 
We denote the proximal operator corresponding to $F(\cdot;\xi)$ with:
\begin{align*}
z_{\mu}(x;\xi) &= \arg\min\limits_{z \in \rset^n} F(z;\xi) + \frac{1}{2\mu} \norm{z - x}^2.
\end{align*}
In particular, when $F(x;\xi) = \mathbb{I}_{X_{\xi}}(x)$ the proximal operator becomes the projection operator $z_{\mu}(x;\xi) = \pi_{X_{\xi}}(x)$. 
The approximate $F_{\mu}(\cdot;\xi)$ has Lipschitz continuous gradient with constant $\frac{1}{\mu}$ and preserves the convexity properties of $F(\cdot;\xi)$, see \cite{RocWet:98}. Hence, it results a new model:
\begin{align}\label{moreau_problem}
F^*_{\mu} = \min\limits_{x \in \rset^n} \; F_{\mu}(x) \quad \left(:= \mathbb{E}[F_{\mu}(x;\xi)]\right).
\end{align}
We denote $X^*_{\mu} = \arg\min\limits_{x} F_{\mu}(x)$, but in general $X^*_{\mu} \neq X^*$. However, there are particular contexts when \eqref{moreau_problem} is equivalent with \eqref{problem}.  As we pointed earlier, in the CFPs, for finite $\Omega$:
$ F(x;\xi) = \mathbb{I}_{X_{\xi}}(x) \;  \text{and} \; X^* = \bigcap\limits_{\xi \in \Omega} X_{\xi}.$
Here,
$F_{\mu}(x;\xi) = \text{dist}_{X_{\xi}}^2(x)$ and the objective of \eqref{moreau_problem} reduces to:
$ F_{\mu}(x) = \frac{1}{2\mu}\mathbb{E}[\text{dist}_{X_{\xi}}^2(x)].$
When $ X^* \neq \emptyset$ then $X^*_{\mu} = X^*$ for all $\mu > 0$.


\section{Weak linear regularity property}


In the CFPs framework, it is widely known that linear regularity property enhances linear convergence of the projection methods (see \cite{Ned:10,CenChe:12,NecRic:18}).
\begin{definition}
	Let $\{X_{\xi}\}_{\xi \in \Omega}$ be convex sets with nonempty intersection $X = \bigcap\limits_{\xi \in \Omega} X_{\xi}$. They are linearly regular with constant $\sigma_X > 0$ if: 
	\begin{align}\label{lreg_definition}
	\sigma_X \text{dist}_{X}^2(x) \le   \mathbb{E}\left[\text{dist}_{X_{\xi}}^2(x)\right]    \qquad \forall x \in \rset^n.
	\end{align}
\end{definition}

\noindent Furthermore, convergence rates for minibatch stochastic projection methods were derived in \cite{NecRic:18}, which depends on the minibatch size under the linear regularity property. 

\noindent Inspired by the fact that the powerful linear regularity ensures linear convergence in SAP, a straight generalization of linear regularity from CFPs world to stochastic optimization \eqref{problem} should further facilitate superior sublinear and linear convergence orders of the stochastic first-order methods.

\noindent Notice that \cite{PatNec:18} analyzed SPP under linearly regular constraints, but they do not exploit eventual interpolation property of the objective function. For example, for the linear regression $\min\limits_x \; \mathbb{E}[(A_{\xi}x - b_{\xi})^2]$, under existence of solution $Ax = b$, the analysis \cite{PatNec:18} provides $\frac{1}{\sqrt{k}}$ convergence, while from our analysis yields \textit{linear} convergence rate for this type of problems. 

\noindent We further state the weak linear regularity assumption which generalizes the classical linear regularity to our model \eqref{problem}.

\begin{assumption}\label{assump_localgrowth}
	The objective function $F$ satisfies weak linear regularity property if  there exists constants $\sigma_{F,\mu}>0$ and $\beta > 0$ such that for any $\mu > 0$:
	\begin{equation}\label{proxqg}
	\frac{\sigma_{F,\mu}}{2}\text{dist}_{X^*}^2(x) \le F_{\mu}(x) - F^*_{\mu}  + \mu \beta \qquad \forall x \in \rset^n.
	\end{equation}
	Moreover, the mapping $\mu \mapsto \sigma_{F,\mu}$ is nonincreasing in $\mu$.	
\end{assumption}
\noindent In CFP case ($f=0,\mu=1$), the right hand side of \eqref{lreg_definition} is $\mathbb{E}[\text{dist}^2_{X_{\xi}}(x)] = F_1(x)$ and thus the linear regularity is covered by constants $\sigma_{F,1} = 2\sigma_X$ and $\beta=0$.

\noindent Also the assumption \ref{assump_localgrowth} can be interpreted as a generalized quadratic growth since for $\mu = 0$  reduces to the well-known pure quadratic growth property for the objective function $F$, which has been extensively analyzed in the deterministic setting, see for example \cite{YanLin:18, XuLin:17, NecNes:16,CenChe:12}. Since in many practical applications the strong convexity does not hold, first-order algorithms exhibit linear convergence under pure quadratic growth and certain additional smoothness conditions \cite{NecNes:16}. However, for general stochastic  first order algorithms the geometric convergence feature cannot be attained, due to the variance in the chosen stochastic direction. Sublinear convergence of the restarted SGD has been shown in \cite{XuLin:17, YanLin:18} under the quadratic growth property and bounded gradients. 

We show in section \ref{sec_funcclass} that weak linear regularity holds for the following particular prediction models:

\begin{itemize}
	\item[$(I)$] Constrained Linear Regression \cite{StoIro:19}: let $X_{\xi} = \{z \in \rset^n: c_{\xi}z \le d_{\xi}\}$
	 $$\min\limits_x \;\; \mathbb{E}[(a_{\xi}x-b_{\xi})^2] + \mathbb{E}[\mathbb{I}_{X_{\xi}}(x)]$$ 
	\item[$(II)$] Regularized dual problem of Multiple kernel learning [2]: let $(X,y)$ be a sampled labeled dataset and $C$ a penalty parameter then dual MKL
	can be formulated as
	\begin{align*}
	\min\limits_{\gamma \in \rset^{}, \alpha \in \rset^n} &  \gamma^2 + \frac{\lambda}{2}\norm{\alpha}^2_2 - \alpha^T e + \mathbb{E}_{\xi \in [m+2]}\left[ \mathbb{I}_{X_{\xi}}(\alpha)\right]
	\end{align*}
	where $X_{\xi} = \{\alpha \in \rset^n: \norm{\sum_i \alpha_i y_i x_{ji}}_2 \le \gamma d_j\}$ for $\xi \in [m] $ and  $X_{m+1} = \{\alpha \in \rset^n: \alpha^Ty = 0\}, X_{m+2} = \{\alpha \in \rset^n: 0 \le \alpha \le C\}$.
	In [2] can be found the analysis of the above problem, without the regularization term $\frac{\lambda}{2}\norm{\alpha}_2^2$. However, small $\ell_2$ regularizations are often used to improve problem conditioning (or boost algorithms performance) under slight perturbations in the solution.
	\item[$(III)$] Network lasso problem [11, Section 2]: let the graph $(V,E)$, where $V$ are vertices and $E$ the edges, then the network lasso problem minimizes a cost given by the sum of a vertices component (established by application) and an edge component which penalize the difference the variables at adjacent nodes:
	\begin{align*}
	\min\limits_x \mathbb{E}_{\xi \in \mathcal{V}} [f_{\xi}(x_{\xi})] + \lambda \mathbb{E}_{(\xi,\nu) \in E} \; w_{\xi \nu}\norm{x_{\xi}-x_{\nu}}_2, 
	\end{align*}
	where $\xi,\nu$ are random variables with values in $\mathcal{V}$.
	For example, in Event Detection in time series the vertex cost is strongly convex: $f_{\xi}(x_{\xi}) = \norm{x_{\xi} - \bar{x}_{\xi}}^2_2 + \mu \norm{x_{\xi}}_2$ (see [11, Section 5.3]).
\end{itemize}

\noindent Further we properly analyze several well-known classes of functions and prove that they satisfy the Assumption \ref{assump_localgrowth}.

\subsection{Classes of functions satisfying weak linear regularity}\label{sec_funcclass}

\noindent  Further we will enumerate some classes of functions which are often found in the optimization and machine learning literature, and then show that functions from each class satisfy the weak linear regularity. We will center our examples on the constrained model: 
\begin{align*}
\min\limits_{x \in \rset^n}  \;  \mathbb{E}_{\xi \in \Omega_1}[f(x;\xi)] \qquad \text{s.t.} \; \bigcap_{\xi \in \Omega_2} X_{\xi},
\end{align*}


\subsubsection{Quadratically growing functions with linearly regular constraints}

A well known strong convexity relaxation which is often used to show the linear convergence for the deterministic first-order methods is the quadratic growth property. In this section we prove that usual quadratic growth together with a smoothness assumption implies the weak linear regularity property. 

\begin{assumption}\label{assump_qgoriginal}
	The function $f$ satisfies quadratic growth with constant $\sigma_f$ if the following relation holds:
	\begin{align*}
	f(x) - f^* \ge \frac{\sigma_f}{2}\text{dist}_{X^*}^2(x) \qquad \forall x \in X.
	\end{align*}
\end{assumption}
\noindent From convexity of function $f$ we have
$\langle \nabla f(x), x - \pi_{X^*}(x) \rangle \ge f(x) - f^* \ge \frac{\sigma_f}{2}\text{dist}_{X^*}^2(x),$
which by Cauchy-Schwartz inequality implies:
\begin{align}\label{grad_qg}
\norm{\nabla f(x)} \ge \frac{\sigma_f}{2}\text{dist}_{X^*}(x) \qquad \forall x \in X.
\end{align}
\begin{assumption}\label{ass_lipgrad}
	Each function $f(\cdot;\xi)$ has Lipschitz continuous gradients with constant $L_{\xi}$, i.e. there exists $L_{\xi}$ such that the following relation holds:
	\begin{align*}
	\norm{\nabla f(x;\xi) - \nabla f(y;\xi)} \le L_\xi \norm{x-y}, \quad \forall x,y \in \rset^n.
	\end{align*}	
\end{assumption}
\noindent It is easy to see that under Lipschitz continuity, there exist a relation between norms of gradients $\nabla f$ and $\nabla f_{\mu}$.
\begin{theorem}\label{lemma_gradorder}
	Let Assumption \ref{ass_lipgrad}, then the following relation holds:
	\begin{align*}
	\mathbb{E}\left[ \frac{\norm{\nabla f(x;\xi)}^2}{(1 + L_{\xi} \mu)^2} \right] \le \mathbb{E}\left[\norm{\nabla f_{\mu}(x;\xi)}^2\right] \qquad \forall x \in \rset^n.
	\end{align*} 	
\end{theorem}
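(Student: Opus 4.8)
The plan is to work pointwise in $\xi$ and invoke only the standard first-order calculus of the Moreau envelope, after which the claimed inequality follows by monotonicity of the expectation. The two ingredients I would rely on are the gradient formula $\nabla f_{\mu}(x;\xi) = \frac{1}{\mu}\left(x - z_{\mu}(x;\xi)\right)$ for the smooth envelope and the characterization of the proximal point $z_{\mu}(x;\xi)$ through its optimality condition.

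First I would write the first-order optimality condition for the proximal subproblem defining $z_{\mu}(x;\xi)$: since the inner objective $f(z;\xi) + \frac{1}{2\mu}\norm{z-x}^2$ is minimized at $z_{\mu}(x;\xi)$ and $f(\cdot;\xi)$ is smooth by Assumption \ref{ass_lipgrad}, the stationarity condition $\nabla f(z_{\mu}(x;\xi);\xi) + \frac{1}{\mu}\left(z_{\mu}(x;\xi) - x\right) = 0$ holds, which rearranges to
\begin{align*}
\nabla f(z_{\mu}(x;\xi);\xi) = \frac{1}{\mu}\left(x - z_{\mu}(x;\xi)\right) = \nabla f_{\mu}(x;\xi).
\end{align*}
In particular this yields the displacement identity $\norm{x - z_{\mu}(x;\xi)} = \mu \norm{\nabla f_{\mu}(x;\xi)}$, which converts the length of the prox step into the size of the envelope gradient.

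Next I would compare $\nabla f(x;\xi)$ with $\nabla f_{\mu}(x;\xi) = \nabla f(z_{\mu}(x;\xi);\xi)$ via Assumption \ref{ass_lipgrad}. Applying $L_{\xi}$-Lipschitz continuity of $\nabla f(\cdot;\xi)$ at the points $x$ and $z_{\mu}(x;\xi)$ gives $\norm{\nabla f(x;\xi) - \nabla f_{\mu}(x;\xi)} \le L_{\xi}\norm{x - z_{\mu}(x;\xi)} = L_{\xi}\mu\norm{\nabla f_{\mu}(x;\xi)}$, where the last equality is exactly the displacement identity from the previous step. A triangle inequality then produces the pointwise bound
\begin{align*}
\norm{\nabla f(x;\xi)} \le \norm{\nabla f_{\mu}(x;\xi)} + L_{\xi}\mu\norm{\nabla f_{\mu}(x;\xi)} = (1 + L_{\xi}\mu)\norm{\nabla f_{\mu}(x;\xi)}.
\end{align*}

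Finally I would divide by $1 + L_{\xi}\mu > 0$, square both sides to obtain $\frac{\norm{\nabla f(x;\xi)}^2}{(1+L_{\xi}\mu)^2} \le \norm{\nabla f_{\mu}(x;\xi)}^2$ for every realization $\xi$, and conclude by taking expectations and using monotonicity of $\mathbb{E}$. I do not anticipate a genuine obstacle: the derivation is a short chain of identities and one Lipschitz estimate. The only points warranting a little care are that the prox optimality condition is a true equality (valid precisely because $f(\cdot;\xi)$ is differentiable under Assumption \ref{ass_lipgrad}) and that the intermediate inequality is purely pointwise in $\xi$, so passing to expectation requires nothing beyond the finiteness already built into the setup.
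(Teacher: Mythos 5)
Your proposal is correct and follows essentially the same route as the paper: triangle inequality plus the $L_{\xi}$-Lipschitz bound on $\norm{\nabla f(x;\xi)-\nabla f(z_{\mu}(x;\xi);\xi)}$, combined with the identities $\nabla f(z_{\mu}(x;\xi);\xi)=\nabla f_{\mu}(x;\xi)$ and $\norm{x-z_{\mu}(x;\xi)}=\mu\norm{\nabla f_{\mu}(x;\xi)}$, then squaring and taking expectation. The only difference is cosmetic: you spell out the prox optimality condition that the paper leaves implicit in its final equality step.
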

\begin{proof}
	Based on the Lipschitz gradient assumption and the triangle inequality we have:
	\begin{align*}
	\norm{\nabla f(x;\xi)} 
	& \le \norm{\nabla f(x;\xi) - \nabla f(z_{\mu}(x;\xi);\xi)} + \norm{\nabla f(z_{\mu}(x;\xi);\xi)} \\
	& \overset{L.c.g.}{\le} L_{\xi}\norm{x - z_{\mu}(x;\xi)} + \norm{\nabla f(z_{\mu}(x;\xi);\xi)} \\
	& = (1 + L_{\xi}\mu) \norm{\nabla f_{\mu}(x;\xi)}.
	\end{align*}
	By taking expectation in both sides we obtain the result.
\end{proof}
\noindent Further we derive the weak linear regularity relation for the extended function $F$.
\begin{theorem}\label{lemma_qg_proxqg}
	Let Assumptions \ref{assump_qgoriginal}-\ref{ass_lipgrad} hold. Also assume that the constraints $\{X_{\xi}\}_{\xi \in \Omega}$ are linearly regular with constant $\sigma_X$. Then, the composite objective $F$ satisfies weak linear regularity (Assumption \ref{assump_localgrowth})  with constants:
	\begin{align*}
	\sigma_{F,\mu} & = \frac{\sigma_f \mu \sigma_X}{\sigma_f \mu^2 + 8(1 + 2\sigma_X)(1 + \mu L_{\max})^2 } \\
	\beta & = \mathbb{E}\left[\norm{\nabla f(x^*;\xi)}^2 \right] + \left[ \frac{1}{\sigma_X} + \frac{1}{4\sigma_X^3}\left(1 + \frac{\sigma_f}{4L_{\max}^2} \right)^2 \right]\norm{\nabla f(x^*)}^2 , 
	\end{align*}
	where $L_{\max} = \sup_{\xi \in \Omega}  L_{\xi}$.
\end{theorem}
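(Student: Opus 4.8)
The plan is to lower-bound the Moreau-envelope gap $F_\mu(x)-F^*_\mu$ by $\text{dist}_{X^*}^2(x)$ up to an $\mathcal{O}(\mu)$ error. Throughout I fix $x\in\rset^n$, set $x^*=\pi_{X^*}(x)$, and exploit that the envelope splits as $F_\mu=f_\mu+G_\mu$ into a smooth part $f_\mu(x)=\mathbb{E}_{\xi\in\Omega_1}[f_\mu(x;\xi)]$ and a constraint part $G_\mu(x)=\frac{1}{2\mu}\mathbb{E}_{\xi\in\Omega_2}[\text{dist}_{X_\xi}^2(x)]$. Since $x^*\in X^*\subseteq X=\bigcap_{\xi\in\Omega_2}X_\xi$ we have $G_\mu(x^*)=0$, hence $F_\mu(x^*)=f_\mu(x^*)\ge F^*_\mu$, which yields the workhorse inequality $f_\mu(x)-f_\mu(x^*)\le F_\mu(x)-F^*_\mu-G_\mu(x)$. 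I then split the target distance as $\text{dist}_{X^*}^2(x)\le 2\,\text{dist}_X^2(x)+2\,\text{dist}_{X^*}^2(\pi_X(x))$ and control the two summands by the constraint and smooth parts of $F_\mu$ respectively.

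For the feasibility summand I invoke linear regularity \eqref{lreg_definition}: $\text{dist}_X^2(x)\le\frac{1}{\sigma_X}\mathbb{E}_{\xi\in\Omega_2}[\text{dist}_{X_\xi}^2(x)]=\frac{2\mu}{\sigma_X}G_\mu(x)$, so it is absorbed directly by $G_\mu$. For the optimality summand, the point $y=\pi_X(x)$ lies in $X$, so the quadratic-growth gradient bound \eqref{grad_qg} gives $\text{dist}_{X^*}^2(y)\le\frac{4}{\sigma_f^2}\norm{\nabla f(y)}^2$. I migrate from $y$ back to $x$ by Lipschitz continuity, $\norm{\nabla f(y)}\le\norm{\nabla f(x)}+L_{\max}\text{dist}_X(x)$, and from the true gradient to the envelope gradients via Jensen together with Theorem \ref{lemma_gradorder}, namely $\norm{\nabla f(x)}^2\le\mathbb{E}_{\xi\in\Omega_1}[\norm{\nabla f(x;\xi)}^2]\le(1+\mu L_{\max})^2\,\mathbb{E}_{\xi\in\Omega_1}[\norm{\nabla f_\mu(x;\xi)}^2]$, where the second step uses $L_\xi\le L_{\max}$.

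It remains to bound $\mathbb{E}_{\Omega_1}[\norm{\nabla f_\mu(x;\xi)}^2]$ by the envelope gap. Each $f_\mu(\cdot;\xi)$ is convex with $\frac{1}{\mu}$-Lipschitz gradient, so the co-coercivity inequality around $x^*$ reads $\frac{\mu}{2}\norm{\nabla f_\mu(x;\xi)-\nabla f_\mu(x^*;\xi)}^2\le f_\mu(x;\xi)-f_\mu(x^*;\xi)-\langle\nabla f_\mu(x^*;\xi),x-x^*\rangle$; after $\norm{a}^2\le 2\norm{a-b}^2+2\norm{b}^2$ and $\mathbb{E}_{\Omega_1}$ this gives $\mathbb{E}_{\Omega_1}[\norm{\nabla f_\mu(x;\xi)}^2]\le\frac{4}{\mu}(f_\mu(x)-f_\mu(x^*))-\frac{4}{\mu}\langle\nabla f_\mu(x^*),x-x^*\rangle+2\,\mathbb{E}_{\Omega_1}[\norm{\nabla f_\mu(x^*;\xi)}^2]$. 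Using the workhorse inequality for the first term and the elementary envelope bound $\norm{\nabla f_\mu(x^*;\xi)}\le\norm{\nabla f(x^*;\xi)}$ (read off from monotonicity of $\nabla f(\cdot;\xi)$ along $x^*-z_\mu(x^*;\xi)=\mu\nabla f_\mu(x^*;\xi)$) for the last, the right-hand side is controlled by $\frac{4}{\mu}(F_\mu(x)-F^*_\mu)$ plus the $\mathbb{E}[\norm{\nabla f(x^*;\xi)}^2]$ contribution to $\beta$.

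The decisive obstacle is the cross term $\langle\nabla f_\mu(x^*),x-x^*\rangle$, which is nonzero precisely because $X^*_\mu\neq X^*$, i.e. $x^*$ need not minimize $F_\mu$. I would handle it by Young's inequality, $|\langle\nabla f_\mu(x^*),x-x^*\rangle|\le\frac{\epsilon}{2}\text{dist}_{X^*}^2(x)+\frac{1}{2\epsilon}\norm{\nabla f_\mu(x^*)}^2$, sending the $\text{dist}_{X^*}^2(x)$ piece back to the left-hand side (legitimate once $\epsilon$ is chosen small relative to the final $\sigma_{F,\mu}$) and the gradient piece into $\mu\beta$; bounding $\norm{\nabla f_\mu(x^*)}$ in terms of $\norm{\nabla f(x^*)}$, again via Lipschitzness of the proximal map, is what produces the $\norm{\nabla f(x^*)}^2$ terms and the $\sigma_X$-powers in $\beta$. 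Collecting the feasibility and optimality estimates and substituting the chain $\text{dist}_{X^*}^2(\pi_X(x))\le\frac{4}{\sigma_f^2}\norm{\nabla f(y)}^2\lesssim\frac{(1+\mu L_{\max})^2}{\sigma_f^2}\big[\tfrac{1}{\mu}(F_\mu(x)-F^*_\mu)+\dots\big]$, then optimizing the Young parameters, finally assembles the stated $\sigma_{F,\mu}$ and $\beta$. I expect the genuinely delicate work to be this last bookkeeping: every intermediate estimate spawns an extra $\text{dist}_X^2(x)$ or $\text{dist}_{X^*}^2(x)$ multiple that must be reabsorbed (using the spare $-G_\mu(x)$ and the linear-regularity slack) without driving $\sigma_{F,\mu}$ to zero, and the monotonicity of $\mu\mapsto\sigma_{F,\mu}$ demanded by Assumption \ref{assump_localgrowth} must be verified directly from the resulting closed form.
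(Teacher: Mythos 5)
Your architecture parallels the paper's proof quite closely: split $\text{dist}_{X^*}^2(x)$ through $\pi_X(x)$, absorb the feasibility part with linear regularity, bound the optimality part via the quadratic-growth gradient inequality \eqref{grad_qg} transported to envelope gradients through Theorem \ref{lemma_gradorder}, and finally convert $\mathbb{E}[\norm{\nabla f_\mu(x;\xi)}^2]$ into the envelope gap. However, there is a genuine gap precisely at the step you yourself flag as ``the decisive obstacle'': handling the cross term $\langle\nabla f_\mu(x^*),x-x^*\rangle$ by Young's inequality against $\text{dist}_{X^*}^2(x)$ cannot work. Quantitatively, in your chain the cross term carries the prefactor $\frac{4}{\mu}\cdot\frac{16(1+\mu L_{\max})^2}{\sigma_f^2}=:C/\mu$, so absorbing $\frac{C\epsilon}{2\mu}\,\text{dist}_{X^*}^2(x)$ into the left-hand side forces $\epsilon\le\mu/C$, and the complementary Young piece then costs at least $\frac{C^2}{2\mu^2}\norm{\nabla f_\mu(x^*)}^2$. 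Because the problem is \emph{constrained}, $\nabla f_\mu(x^*)\to\nabla f(x^*)\neq 0$ as $\mu\to 0$, so this residual blows up like $1/\mu^2$, whereas Assumption \ref{assump_localgrowth} tolerates only an additive $\mu\beta$ with $\beta$ independent of $\mu$ (equivalently, after dividing by $\sigma_{F,\mu}=\Theta(\mu)$, an $O(1)$ residual). The same obstruction reappears when you try to re-absorb the leftover $\text{dist}_X^2(x)$ multiples for small $\sigma_X$, since the only generic lower bound on $f_\mu(x)-f_\mu(x^*)$ is again a cross term of this type.

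The missing ingredient is the first-order optimality of $x^*$, i.e.\ Assumption \ref{assump_basic}$(iv)$, which your argument never invokes: w.l.o.g.\ $\langle g_f(x^*),z-x^*\rangle\ge 0$ for all $z\in X$. The paper decomposes every cross term along $x-x^*=(x-\pi_X(x))+(\pi_X(x)-x^*)$; the component along $\pi_X(x)-x^*$ is nonnegative by optimality and is dropped, while the component along $x-\pi_X(x)$ is bounded by $\norm{g_f(x^*)}\text{dist}_X(x)$ and then Young'd against the linear-regularity slack $\frac{\sigma_X}{2\mu}\text{dist}_X^2(x)$, at the price $\frac{\mu}{\sigma_X}\norm{g_f(x^*)}^2=O(\mu)$ --- exactly the $\norm{\nabla f(x^*)}^2/\sigma_X$-type contributions in the stated $\beta$. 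A further structural difference lets the paper avoid your troublesome term altogether: instead of co-coercivity around $x^*$, it extracts $\frac{\mu}{4}\mathbb{E}[\norm{\nabla f_\mu(x;\xi)}^2]$ directly from the prox definition (inequality \eqref{lowbound_moreau}), so only subgradients $g_f(x^*;\xi)$ of the \emph{original} $f$ at $x^*$ ever appear, never $\nabla f_\mu(x^*)$. Your skeleton could be repaired along these lines (replacing $\nabla f_\mu(x^*)$ by $\nabla f(x^*)$ up to an error $\mu L_{\max}\mathbb{E}[\norm{\nabla f(x^*;\xi)}]$ via nonexpansiveness of the prox, then applying the optimality decomposition), but as written the proposal does not establish the theorem, and in particular cannot yield a $\mu$-independent $\beta$.
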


\begin{proof}
The proof can be found in the Appendix.
\end{proof}

\begin{remark}\label{rem_qg}
	In the unconstrained finite case, when $X = \rset^n, \Omega = [m]$, a well-known class of problems having quadratic growth is given by:
	$$ F(x) := \mathbb{E}[g(A_{\xi}x;\xi)],$$
	where $g(\cdot;\xi)$ is strongly convex with constant $\sigma_{\xi} > 0$ and has Lipschitz gradients. However, for completeness we sketch the arguments for proving the property.	From the strong convexity property of $g(\cdot;\xi)$ we derive the restricted-strong convexity:
	\begin{align}\label{gax_qg}
	g(A_{\xi}x;\xi) \ge g(A_{\xi}y;\xi) + \langle A_{\xi}^T \nabla g(A_{\xi}y;\xi), x - y \rangle + \frac{\sigma_{\xi}}{2}\norm{A_{\xi}(x-y)}^2, \quad \forall x, y \in \rset^n.
	\end{align}
	Let $x_1^*, x_2^* \in X^*$, then setting $x = x_1^*$ and $y = x_2^*$ and by taking expectation in both sides, results:
	\begin{align*}
	F(x_1^*) 
	&\ge F(x_2^*) + \langle \nabla F(x_2^*), x_1^* - x_2^* \rangle + \frac{1}{2}\langle x_1^* - x_2^*, \mathbb{E}[\sigma_{\xi}A_{\xi}^TA_{\xi}](x_1^* - x_2^*) \rangle \\
	&\ge F(x_2^*) + \frac{1}{2}\langle x_1^* - x_2^*, \mathbb{E}[\sigma_{\xi}A_{\xi}^TA_{\xi}](x_1^* - x_2^*) \rangle.
	\end{align*}
	Since $F(x_1^*) = F(x_2^*)$, the relation yields that there are unique $y^*_{\xi} = A_{\xi}x^*$ for all $x^* \in X^*, \xi \in \Omega$. Therefore, we clearly have that $X^* = \{x: A_{\xi} x = y^*_{\xi} \}$ and we can use the Hoffman's bound to derive: there is $\kappa_c>0$ such that $\mathbb{E}[\norm{A_{\xi}x - y^*_{\xi}}^2] \ge\kappa_c \text{dist}_{X^*}^2(x)$. Using this fact in \eqref{gax_qg} with $y = x^*$  then we reach our conclusion.
	
	\noindent For example, linear regression can be casted by the above particular model, i.e.
	$$ F(x) = \mathbb{E}[(a_{\xi}^Tx - b_{\xi})^2].$$
	Notice that we do not assume the interpolation property and thus the model admits systems $Ax = b$ without solution. Based on standard arguments from literature, it can be shown that the above unconstrained model can be further generalized to linear constraints and polyhedral regularization (see \cite{DruLew:18,XuLin:17, NecNes:16}). 
	Many practical applications can be casted into one these models (see \cite{DruLew:18,XuLin:17}), such as LASSO-regularized regression: $\min_x \norm{Ax-b}^2 +\lambda \norm{x}_1$ \cite{fista}, support vector machine with polyhedral regularization: $\min_x \frac{1}{m} \sum\limits_{i=1}^m \max\{0, a_{\xi}x - b_{\xi}\} + \lambda \norm{x}_1$ \cite{XuLin:17, ShaSin:11}, constrained linear regression $\min_x \; \norm{Ax-b}^2 \; \text{s.t.} \; Cx \le d$ \cite{CenChe:12}, etc.
\end{remark}


\subsubsection{Restricted strongly convex function with general constraints}

A slightly more restrictive class of functions is described by the restricted strong convexity (RSC) property, which has been extensively analyzed in \cite{RyuBoy:16,TouTra:16,YanLin:18,AsiDuc:18}. 
Notice that in \cite{AsiDuc:18} the authors derive complexity of SPP under RSC for each $f(\cdot;\xi)$ and strong convexity on $F$, which might indirectly allow indicator functions. However, our completely different analysis allows non-strongly convex objectives and a direct particularization to CFPs setting, i.e. indicator functions.

\begin{definition} The function $f(\cdot;\xi)$ is $M_{\xi}-$restricted strongly convex if there exists $M_{\xi} \succeq 0$ such that
	\begin{align*}
	f(x;\xi) \ge f(y;\xi) + \langle g_f(y;\xi),x - y\rangle + \frac{1}{2}\langle x - y, M_{\xi}(x-y) \rangle, \qquad \forall x,y \in \rset^n.
	\end{align*}
\end{definition}
\noindent Although it describes the behavior of each component $f(\cdot;\xi)$, this restricted convexity allows the elimination of other smoothness assumptions, which for the previous quadratic growth is not the case. Next, we show that the smoothing function also inherits the restricted strong convexity with specific parameter.
\begin{lemma}\label{lemma_restrictedscqg}
	Let $f(\cdot;\xi)$ be $M_{\xi}-$restricted strongly convex (RSC). 
	Then, given $\mu > 0$, the approximation $f_{\mu}$ is $\mathbb{E}\left[\frac{M_{\xi}}{\lambda_{\max}(I + \mu M_{\xi})} \right]-$RSC: 
	\begin{align*}
	f_{\mu}(x) \ge f_{\mu}(y) + \langle \nabla f_{\mu}(y), x - y\rangle + \frac{1}{2} \langle x - y,\mathbb{E}\left[\frac{M_{\xi}}{\lambda_{\max}(I + \mu M_{\xi})} \right] (x - y)\rangle \qquad \forall x,y \in \rset^n.
	\end{align*}
\end{lemma}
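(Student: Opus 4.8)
The plan is to prove the per-component statement that, for each fixed $\xi$, the Moreau envelope $f_{\mu}(\cdot;\xi)$ is $\frac{M_{\xi}}{\lambda_{\max}(I+\mu M_{\xi})}$-RSC, and then recover the claimed inequality for $f_{\mu}=\mathbb{E}[f_{\mu}(\cdot;\xi)]$ by taking expectation and using linearity together with $\nabla f_{\mu}(x)=\mathbb{E}[\nabla f_{\mu}(x;\xi)]$. Throughout I would use the standard Moreau identities: writing $z_x=z_{\mu}(x;\xi)$ for the proximal point, the optimality condition gives $\nabla f_{\mu}(x;\xi)=\frac{1}{\mu}(x-z_x)=g_f(z_x;\xi)$, where $g_f(z_x;\xi)$ is the particular subgradient furnished by that optimality condition, and the envelope value is $f_{\mu}(x;\xi)=f(z_x;\xi)+\frac{1}{2\mu}\norm{x-z_x}^2$.

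First I would fix $x,y$ and introduce the two differences $w=z_x-z_y$ and $p=(x-z_x)-(y-z_y)$, so that $x-y=w+p$ and $\mu\big(\nabla f_{\mu}(x;\xi)-\nabla f_{\mu}(y;\xi)\big)=p$. Applying the $M_{\xi}$-RSC hypothesis of $f(\cdot;\xi)$ at the pair $(z_x,z_y)$, namely $f(z_x;\xi)\ge f(z_y;\xi)+\langle g_f(z_y;\xi),w\rangle+\frac{1}{2}\langle w,M_{\xi}w\rangle$, and substituting the Moreau identities into $f_{\mu}(x;\xi)-f_{\mu}(y;\xi)$, the first-order contribution cancels against $\langle\nabla f_{\mu}(y;\xi),x-y\rangle$ and the remaining quadratic-in-$x,y$ cross terms collapse into $\frac{1}{2\mu}\norm{p}^2$. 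Hence the desired RSC inequality for $f_{\mu}(\cdot;\xi)$ reduces to the purely algebraic statement
\[
\frac{1}{\mu}\norm{p}^2+\langle w,M_{\xi}w\rangle\ \ge\ \langle x-y,\tilde M_{\xi}(x-y)\rangle,\qquad \tilde M_{\xi}=\frac{M_{\xi}}{1+\mu\lambda_{\max}(M_{\xi})},
\]
where I used $\lambda_{\max}(I+\mu M_{\xi})=1+\mu\lambda_{\max}(M_{\xi})$.

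The crux is this last inequality. Writing $L=\lambda_{\max}(M_{\xi})$ and expanding $\langle x-y,M_{\xi}(x-y)\rangle=\langle w,M_{\xi}w\rangle+2\langle w,M_{\xi}p\rangle+\langle p,M_{\xi}p\rangle$, I would multiply the target by $1+\mu L>0$; after using $\langle p,M_{\xi}p\rangle\le L\norm{p}^2$ to absorb one term, it remains to establish $\frac{1}{\mu}\norm{p}^2+\mu L\langle w,M_{\xi}w\rangle\ge 2\langle w,M_{\xi}p\rangle$. This follows from Young's inequality applied to $2\langle M_{\xi}^{1/2}w,M_{\xi}^{1/2}p\rangle$ with parameter $\mu L$, giving $2\langle w,M_{\xi}p\rangle\le \mu L\langle w,M_{\xi}w\rangle+\frac{1}{\mu L}\langle p,M_{\xi}p\rangle$, followed once more by $\langle p,M_{\xi}p\rangle\le L\norm{p}^2$. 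I expect the main obstacle to be exactly the control of the cross term $\langle w,M_{\xi}p\rangle$: because $M_{\xi}$ is a matrix rather than a scalar, the sharp scalar Moreau formula $\frac{m}{1+\mu m}$ does not transfer verbatim, and bounding this cross term through $\lambda_{\max}(M_{\xi})$ is precisely what forces the coarser normalization by $\lambda_{\max}(I+\mu M_{\xi})$ rather than the tighter $(I+\mu M_{\xi})^{-1}$.

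Finally, taking expectation over $\xi$ in the per-component inequality $f_{\mu}(x;\xi)\ge f_{\mu}(y;\xi)+\langle\nabla f_{\mu}(y;\xi),x-y\rangle+\frac{1}{2}\langle x-y,\tilde M_{\xi}(x-y)\rangle$ and invoking $f_{\mu}=\mathbb{E}[f_{\mu}(\cdot;\xi)]$ and $\nabla f_{\mu}=\mathbb{E}[\nabla f_{\mu}(\cdot;\xi)]$ yields the stated result with modulus $\mathbb{E}\big[\frac{M_{\xi}}{\lambda_{\max}(I+\mu M_{\xi})}\big]$.
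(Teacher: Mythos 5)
Your proof is correct, but it follows a genuinely different route from the paper's. The paper never works at the level of function values until the very end: it first invokes the equivalence (Lemma \ref{lemma_sc_link}, which requires continuous differentiability and is proved via the mean value theorem) between RSC and the gradient monotonicity inequality, then establishes the contraction property $\norm{(I_n+\mu M_{\xi})^{1/2}(z_{\mu}(x;\xi)-z_{\mu}(y;\xi))}\le\norm{(I_n+\mu M_{\xi})^{-1/2}(x-y)}$ by Cauchy--Schwarz, deduces monotonicity of $\nabla f_{\mu}$ with modulus $\frac{1}{\mu}\left(I_n-\mathbb{E}\left[(I_n+\mu M_{\xi})^{-1}\right]\right)$, rewrites this via the identity $I_n-(I_n+\mu M_{\xi})^{-1}=\mu M_{\xi}^{1/2}(I_n+\mu M_{\xi})^{-1}M_{\xi}^{1/2}$, and only then integrates back to the RSC inequality. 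You instead stay entirely with function values: the Moreau identities plus the decomposition $x-y=w+p$ collapse everything to the algebraic inequality $\frac{1}{\mu}\norm{p}^2+\langle w,M_{\xi}w\rangle\ge\langle x-y,\tilde M_{\xi}(x-y)\rangle$, which you settle by Young's inequality and $\langle p,M_{\xi}p\rangle\le\lambda_{\max}(M_{\xi})\norm{p}^2$, and you take expectation at the very end. Your argument is more elementary and self-contained (no equivalence lemma, no MVT integration, and it uses only the subgradient furnished by the prox optimality condition, so differentiability of $f(\cdot;\xi)$ is never needed), and I verified the two key steps: the cross terms do collapse to $\frac{1}{2\mu}\norm{p}^2$, and the Young step closes the reduced inequality. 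What the paper's route buys in exchange is sharpness: its intermediate modulus $\mathbb{E}\left[M_{\xi}(I_n+\mu M_{\xi})^{-1}\right]$ is the exact matrix analogue of the scalar Moreau constant $\frac{\sigma}{1+\mu\sigma}$, and the stated constant $\mathbb{E}\left[\frac{M_{\xi}}{\lambda_{\max}(I_n+\mu M_{\xi})}\right]$ arises only by a final deliberate relaxation; so your closing remark that the coarser normalization is \emph{forced} by the matrix structure is an artifact of your cross-term estimate, not intrinsic to the result.
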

\begin{proof}
	The proof can be found in the appendix.
\end{proof}
\noindent Recall the fact that for a strongly convex function with constant $\sigma$, its Moreau smoothing remains strongly convex with constant $\frac{\sigma}{1 + \mu\sigma}$, see \cite{Roc:70}. Thus it is obvious that the RSC matrix $\frac{M_{\xi}}{\lambda_{\max}(I + \mu M_{\xi})}$ is a natural generalization of the strong convexity constant.
\noindent The $M_{\xi}$-RSC property do not require that the functional component $f(\cdot;\xi)$ to be strongly convex since $M_{\xi} \succeq 0$. However, if $M_f = \mathbb{E}[M_{\xi}] \succ 0$, then $f$ is $\lambda_{\min}(M_f)-$strongly convex.  
Although we are more interested in the non-strongly convex objective functions, further we additionally analyze for completeness the strongly convex case. Moreover, we show that the linear regularity brings significant advantages when $F$ is strongly convex.

\noindent Now we provide the result stating that, under RSC, the extended objective function $F$ satisfies the weak linear regularity property.

\begin{theorem}\label{th_rsc_proxqg}
	Let $f(\cdot;\xi)$ be $M_{\xi}-$RSC and denote $\hat{M}_f = \mathbb{E}\left[ \frac{M_{\xi}}{\lambda_{\max}(I_n + \mu M_{\xi})}\right]$. Then the composite function $F (x) : = \mathbb{E} \left[f(x;\xi)\right] + \mathbb{E} \left[\mathbb{I}_{X_{\xi}}(x)\right] $ satisfies the following properties:
	
	\noindent $(i)$ Assume that $M_f \succeq 0$, $y^* = M_f^{1/2}x^*$ is unique for all optimal points $x^*$ and $\{X_{\xi}\}_{\xi \in \Omega}$ are linearly regular with constant $\sigma_X$. By denoting $\hat{X} = \{x: M_f^{1/2}x = y^*\}$, assume that the optimal set is defined by $X^* = X \cap \hat{X}$ and $(\hat{X},\{X_{\xi}\}_{\xi \in \Omega})$ are linearly regular with constant $\sigma_{f,X}$. Then $F$ satisfies Assumption \ref{assump_localgrowth} with constants:
	\begin{align*}
	\sigma_{F,\mu} \!=\! \frac{\min\{1, \lambda_{\max}(M_{\max})\}}{2(1 + \mu \lambda_{\max}(M_{\max}))} \sigma_{f,X}, \;\;   
	\beta = \frac{\mu}{2} \left(1 +\! \frac{2}{\sigma_X}\! \right)\mathbb{E}\left[\norm{g_f(x^*;\xi)}^2 \right] \! +\! \frac{\mu}{2\sigma_X}\norm{g_f(x^*)}^2,
	\end{align*}
	where $M_{\max} = \sup\limits_{\xi \in \Omega} \lambda_{\max}(M_{\xi})$.
	
	\noindent $(ii)$ If $M_f \succ 0$, then $F$ satisfies Assumption \ref{assump_localgrowth} with constants:
	$$\sigma_{F,\mu} = \lambda_{\min}(\hat{M}_F) \qquad \beta = \frac{1}{2}\mathbb{E}[\norm{g_F(x^*;\xi)}^2].$$
	
	\noindent $(iii)$ If $M_f \succ 0$ and $\{X_{\xi}\}_{\xi \in \Omega}$ are linearly regular with constant $\sigma_X$, then $F$ satisfies weak linear regularity assumption \ref{assump_localgrowth} with constants:
	$$\sigma_{F,\mu} = \lambda_{\min}(\hat{M}_F) \qquad \beta = \frac{1}{2}\mathbb{E}[\norm{g_f(x^*;\xi)}^2] + \frac{1}{2\sigma_X}\norm{\mathbb{E}[g_f(x^*;\xi)]}^2.$$
\end{theorem}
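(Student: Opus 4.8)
The plan is to exploit the additive structure $F_{\mu} = \tilde{f}_{\mu} + h_{\mu}$, where $\tilde{f}_{\mu}(x) = \mathbb{E}[f_{\mu}(x;\xi)]$ collects the smoothed RSC components and $h_{\mu}(x) = \frac{1}{2\mu}\mathbb{E}[\text{dist}_{X_{\xi}}^2(x)]$ collects the smoothed indicators (each $\xi$ activates a single component, so the Moreau envelope splits componentwise). Lemma \ref{lemma_restrictedscqg} already gives that $\tilde{f}_{\mu}$ is $\hat{M}_f$-RSC. The two summands encode complementary growth mechanisms: $\tilde{f}_{\mu}$ grows along the range of $M_f$, i.e.\ it controls $\text{dist}_{\hat{X}}(x)$, while $h_{\mu}$, through linear regularity of $\{X_{\xi}\}$, grows toward feasibility, i.e.\ it controls $\text{dist}_{X}(x)$. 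Since $X^* = X \cap \hat{X}$, the linear regularity of the pair $(\hat{X},\{X_{\xi}\})$ is precisely the device that fuses these two partial growths into a genuine quadratic growth in $\text{dist}_{X^*}$.

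\textbf{The general case $(i)$.} First I would expand the $\hat{M}_f$-RSC inequality of $\tilde{f}_{\mu}$ at an optimal $x^*$, producing the quadratic $\frac{1}{2}\langle x-x^*,\hat{M}_f(x-x^*)\rangle$ and a linear term $\langle \nabla \tilde{f}_{\mu}(x^*), x-x^*\rangle$. Using $\lambda_{\max}(I+\mu M_{\xi}) \le 1+\mu\lambda_{\max}(M_{\max})$ and monotonicity of the expectation one gets $\hat{M}_f \succeq \frac{M_f}{1+\mu\lambda_{\max}(M_{\max})}$, so the quadratic dominates $\frac{1}{2(1+\mu\lambda_{\max}(M_{\max}))}\norm{M_f^{1/2}(x-x^*)}^2$. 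The identity $M_f^{1/2}(x-x^*) = M_f^{1/2}(x-\pi_{\hat{X}}(x))$ (both equal $M_f^{1/2}x - y^*$) makes this comparable to $\text{dist}_{\hat{X}}^2(x)$. Simultaneously, linear regularity of $\{X_{\xi}\}$ gives $h_{\mu}(x) \ge \frac{\sigma_X}{2\mu}\text{dist}_X^2(x)$. Adding suitable multiples of the two bounds and invoking the pair regularity $\sigma_{f,X}\text{dist}_{X^*}^2(x) \le \text{dist}_{\hat{X}}^2(x) + \mathbb{E}[\text{dist}_{X_{\xi}}^2(x)]$ collapses everything into $\text{dist}_{X^*}^2(x)$; the $\min\{1,\lambda_{\max}(M_{\max})\}$ factor is the normalization assigning both partial-distance terms a common weight before the pair regularity is applied.

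\textbf{The additive error and cases $(ii)$--$(iii)$.} The leftover linear term $\langle \nabla \tilde{f}_{\mu}(x^*), x-x^*\rangle$ is what produces $\mu\beta$. The key estimate is $\norm{\nabla f_{\mu}(x^*;\xi)} \le \norm{g_f(x^*;\xi)}$, which follows from $\nabla f_{\mu}(x^*;\xi) \in \partial f(z_{\mu}(x^*;\xi);\xi)$, the prox identity $x^* - z_{\mu}(x^*;\xi) = \mu\nabla f_{\mu}(x^*;\xi)$, and monotonicity of $\partial f(\cdot;\xi)$. Pairing this with a Young inequality against the distance quadratics already produced — where the $\frac{1}{2\mu}$ weight of $h_{\mu}$ is what forces the residual to scale like $\mu$ — yields the stated $\beta$, the $\frac{1}{\sigma_X}$-weighted pieces recording the cost of pushing the normal-cone subgradients across the regularity inequality. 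Cases $(ii)$ and $(iii)$ are the specializations with $M_f \succ 0$: then $M_f^{1/2}$ is invertible, $\hat{X}$ reduces to a point, and the RSC term alone already dominates $\lambda_{\min}(\hat{M}_F)\norm{x-x^*}^2$, so no pair-regularity step is needed and $\sigma_{F,\mu} = \lambda_{\min}(\hat{M}_F)$. Case $(ii)$ keeps the crude bound $\beta = \frac{1}{2}\mathbb{E}[\norm{g_F(x^*;\xi)}^2]$ with the full subgradients, whereas $(iii)$ additionally routes the constraint subgradients through linear regularity of $\{X_{\xi}\}$ to replace them by the term $\frac{1}{2\sigma_X}\norm{\mathbb{E}[g_f(x^*;\xi)]}^2$.

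\textbf{Main obstacle.} The hard part is the bookkeeping that fuses two incompatible metrics — the $M_f$-weighted seminorm governing $\tilde{f}_{\mu}$ against the Euclidean feasibility distance governing $h_{\mu}$ — through the pair regularity, while simultaneously keeping the unavoidable linear term $O(\mu)$-small. One must choose the relative weights of the two partial growth bounds and the Young parameter so that the quadratic coefficients assemble into exactly $\frac{\sigma_{F,\mu}}{2}\text{dist}_{X^*}^2(x)$ and the remainder is a genuine constant times $\mu$; obtaining the precise closed-form constants (rather than merely some positive $\sigma_{F,\mu}$ and finite $\beta$) is where the estimate is delicate. Finally, the monotonicity of $\mu \mapsto \sigma_{F,\mu}$ demanded by Assumption \ref{assump_localgrowth} is immediate, since $\sigma_{F,\mu}$ decreases in $\mu$ through the denominator $1+\mu\lambda_{\max}(M_{\max})$.
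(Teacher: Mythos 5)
Your overall architecture (splitting $F_\mu = f_\mu + D_\mu$, invoking Lemma \ref{lemma_restrictedscqg}, dominating $\hat M_f \succeq M_f/(1+\mu\lambda_{\max}(M_{\max}))$, and fusing the $M_f$-seminorm with the feasibility distance through the pair regularity of $(\hat X,\{X_\xi\}_{\xi\in\Omega})$) matches the paper's endgame, in particular its inequality \eqref{hoffbnd_convex}. But there is a genuine gap exactly at the point you flag as delicate: the linear term. You expand the RSC inequality at an optimizer $x^*$ of the \emph{original} problem, leaving $\langle \nabla \tilde f_\mu(x^*), x-x^*\rangle$, and propose to neutralize it via $\norm{\nabla f_\mu(x^*;\xi)} \le \norm{g_f(x^*;\xi)}$ (correct, by monotonicity of $\partial f(\cdot;\xi)$) plus a Young inequality against the distance quadratics. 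This cannot yield a $\mu$-independent $\beta$. First-order optimality of $x^*$ controls the subgradient $g_f(x^*)$ (namely $-g_f(x^*) \in \mathcal{N}_X(x^*)$), not the Moreau gradient $\nabla\tilde f_\mu(x^*)$; since no smoothness is assumed in the RSC setting, the mismatch $\nabla\tilde f_\mu(x^*) - g_f(x^*)$ is bounded but not $O(\mu)$, and it couples to the full displacement $x-x^*$, which in case $(i)$ is dominated neither by $\norm{\hat M_f^{1/2}(x-x^*)}^2$ (degenerate $M_f$) nor by $\text{dist}_X(x)$. Young against the $\frac{\sigma_X}{4\mu}\text{dist}_X^2(x)$ term --- your stated source of the $O(\mu)$ scaling --- is available only when the linear term is controlled by $\text{dist}_X(x)$ alone; and in case $(ii)$ there is no regularity term to Young against at all, so absorbing the linear term into $\frac{\lambda_{\min}(\hat M_f)}{2}\norm{x-x^*}^2$ leaves a residual of order $\frac{1}{\lambda_{\min}(\hat M_f)}\mathbb{E}\left[\norm{g_f(x^*;\xi)}^2\right]$, which is $O(1)$, i.e.\ $\beta = O(1/\mu)$, contradicting the fixed $\beta$ demanded by Assumption \ref{assump_localgrowth}.

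The paper's resolution, which your proposal is missing, is to expand the quadratic at minimizers of the \emph{smoothed} problems rather than at $x^*$: at $\tilde x^*_\mu = \arg\min_{x\in X} f_\mu(x)$ in case $(i)$, so that first-order optimality of the smooth constrained problem gives $\langle \nabla f_\mu(\tilde x^*_\mu), \pi_X(x) - \tilde x^*_\mu\rangle \ge 0$ and the linear term reduces to $-\norm{\nabla f_\mu(\tilde x^*_\mu)}\text{dist}_X(x)$, exactly the shape the $\frac{\sigma_X}{4\mu}\text{dist}_X^2(x)$ term can absorb at $O(\mu)$ cost; and at $x^*_\mu = \arg\min_x F_\mu(x)$ in cases $(ii)$--$(iii)$, where the gradient vanishes and no linear term appears at all. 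The price is a recentring error $\frac12\langle x^*-\tilde x^*_\mu, \hat M_f(x^*-\tilde x^*_\mu)\rangle$ (resp.\ with $x^*_\mu$), which the paper bounds by plugging $x=x^*$ back into the derived inequality and invoking Lemma \ref{lemma_auxres}$(ii)$--$(iii)$: the Moreau gap $F^* - F_\mu(\cdot) \le \frac{\mu}{2}\mathcal{S}^*_F$ is the true source of the $O(\mu)$ scaling of $\beta$, together with Lemma \ref{lemma_lipgrad} (applied to the $\frac{1}{\mu}$-smooth $f_\mu$) to convert $\norm{\nabla f_\mu(\tilde x^*_\mu)}^2$ into a function gap. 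Without this change of expansion point and the accompanying bootstrap, the stated constants --- indeed any $\mu$-uniform $\beta$ --- are out of reach.
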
 

\begin{proof}
The proof can be found in Appendix.
\end{proof}

\begin{remark}
	It is clear that the assumptions of Theorem \ref{th_rsc_proxqg} $(i)$ hold for similar models as in the previous quadratic growth case:
	\begin{align*}
	\min\limits_{x \in \rset^n} & \; \mathbb{E}[g(A_{\xi}x;\xi)] \\
	\text{s.t.} & \;\; C_{\xi}x \le d_{\xi}, \quad \forall \xi \in \Omega.
	\end{align*}
	where $g(\cdot;\xi)$ is strongly convex with constant $\sigma_{\xi} > 0$ and $\Omega = \{1, \cdots, m\}$. However, the Lipschitz gradient assumption is not necessary any more. 
	Since $M_{\xi} = A_{\xi}^TA_{\xi}$, using similar arguments as in Remark \ref{rem_qg}, we have that  there are unique $y^*_{\xi} = A_{\xi}x^*$ and $y^* = M_f^{1/2} x^*$ for all $x^* \in X^*, \xi \in \Omega$. By observing that 
	$$M_f^{1/2} x = y^* \;\; \Leftrightarrow \;\; \langle x - x^*, \mathbb{E}[A_{\xi}^TA_{\xi}](x - x^*) \rangle = 0 \;\; \Leftrightarrow \;\; A_{\xi}x = y^*_{\xi}  $$
	we clearly have that $X^* = \{x^*: M_f^{1/2} x^* = y^*, C_{\xi}x \le d_{\xi}, \xi \in \Omega \}$.
\end{remark}
\noindent The same above arguments extend further to the case when $g(\cdot;\xi)$ is strongly convex on any compact set \cite{YanLin:18}. In this way the model covers the sparse robust regression problem:
\begin{align*}
 \min\limits_{w \in \rset^n} \; \mathbb{E}_{\xi}[\;|a_{\xi}w - b_{\xi} |^p\;] + \lambda \norm{w}_1, \qquad p \in (1,2).
\end{align*}
	


	\noindent It is important to observe that, under restricted strong convexity with $M_f \succ 0$, the weak linear regularity hold for general convex constraints, i.e. even if the constraints are not linearly regular.  In \cite{PatNec:18}, the linear regularity property of the constraint sets was essential to get the sublinear convergence rates. Also in  \cite{YanLin:18} no regularity is required, but a full projection on the entire feasible set is performed  at each iteration.


\subsubsection{Convex feasibility problems}

\noindent The most intuitive function class in our framework proves to be the indicator functions class, where $f(x;\xi) = 0$. Let $\{X_{\xi}\}_{1 \le \xi \le m}$ be a finite collection of convex sets and $X = \Large\cap_{\xi} X_{\xi} \neq \emptyset$. Under these terms yields that:
\begin{align*}
F_{\mu}(x) = \mathbb{E}\left[\text{dist}^2_{X_{\xi}}(x)\right], \qquad F_{\mu}^* = F^* = 0, \qquad  X_{\mu}^* = X^* = X.
\end{align*}
Then, it is easy to see that under the linear regularity assumption, the weak linear regularity property is immediately implied
\begin{align}\label{cfp_qg}
F_{\mu}(x) - F_{\mu}^* =  \mathbb{E}\left[\frac{1}{2\mu}\text{dist}^2_{X_{\xi}}(x)\right] \ge \frac{\sigma_X}{2\mu} \text{dist}_{X}^2(x) \qquad \forall x \in \rset^n,
\end{align} 
with corresponding constants $\sigma_{F,\mu} = \frac{\sigma_X}{\mu}$ and $\beta = 0$. Most common example of linearly regular sets are the polyhedral sets. Also in the case when the intersection has nonempty interior and contains a ball of radius $\delta$, then the linear regularity holds with constant $\sigma_X$ dependent on $\delta$, see \cite{Ned:10}.



\section{Stochastic Proximal Point algorithm}

In the following section we present a one-step stochastic proximal point scheme for problem \eqref{problem} and analyze its convergence behavior. Observing that $\pi_X(\cdot)$ is in fact a particular proximal operator associated to $\mathbb{I}_X(\cdot)$, then
the stochastic alternating projection iteration
$x^{k+1}  = \pi_{X_{\xi_k}}(x^k)$ is a particular expression of the more general SPP iteration:
\begin{align*}
x^{k+1} 
& = z_{\mu_k}(x^k;\xi_k).
\end{align*}
Thus SPP algorithm might be interpreted as a natural generalization of SAP (see e.g. \cite{CenChe:12}). 
Since in general $X_{\mu}^* \neq X^*$, the smoothing parameter should be decreased in order to guarantee convergence towards the minimizer of the original problem. 

\noindent Let initial iterate $x^0 \in \rset^n$ and $\{\mu_k\}_{k \ge 0}$ be a nonincreasing positive sequence of
stepsizes.

\begin{flushleft}
	\textbf{ Stochastic Proximal Point (SPP) ($x_0, \{\mu_k\}_{k\ge 0}$) }: \quad For $k\geq 1$ compute\\
	1. Choose randomly $\xi_k \in \Omega$ w.r.t. probability distribution $\mathbb{P}$\\
	2. Update: $x^{k+1}  =  z_{\mu_k}(x^{k};\xi_k)$.
\end{flushleft}
\noindent Note that there are many practical cases when the proximal operator $z_{\mu}$ can be computed easily or even has a closed form. To exemplify a few: 
\begin{itemize}
	\item[$(i)$]  the least-square loss $F(x;\xi) = \frac{1}{2}(a^T_{\xi}x - b_{\xi})^2$, $z_{\mu}(x;\xi) = x - \frac{\mu (a^T_{\xi} - b_{\xi})}{1 + \mu \norm{a_{\xi}}^2} a_{\xi}$; 
	\item[$(ii)$] regularized hinge-loss $F(x;\xi) = \max\{0, a^T_{\xi}x - b_{\xi} \} + \frac{\lambda}{2}\norm{x}^2_2$, $z_{\mu}(x;\xi) = \frac{1}{1+\lambda\mu}\left(x - \mu a_{\xi} s \right)$, where $s = \pi_{[0,1]}\left( \frac{1-\lambda \mu}{\mu(1 + \lambda \mu)}\frac{a^T_{\xi}x}{\norm{a_{\xi}}^2} - \frac{1 + \lambda\mu}{\mu}\frac{b}{\norm{a_{\xi}}^2} \right)$.
	\item[$(iii)$] halfspace: $ H_{\xi} = \{x: a_{\xi}^Tx \le b_{\xi}\}, F(x;\xi) = \mathbb{I}_{H_{\xi}}(x), z_{\mu}(x;\xi) = x - \frac{\max\{0,a_{\xi}^Tx - b_{\xi}\}}{ \norm{a_{\xi}}^2 } a_{\xi}$.
\end{itemize} 


\section{Iteration complexity of SPP under weak linear regularity}\label{sec_mainresults}

\noindent In this section  we derive sublinear and linear convergence rates of stochastic proximal point scheme under various convexity and regularity conditions of the objective function. 

\noindent Further we present some lower bounds on the gap between the smooth approximations and the optimal values of the objective function.
\begin{lemma}\label{lemma_auxres}
	Given $\mu > 0$, let $\{X_{\xi}\}_{\xi \in \Omega}$ be some closed convex sets and  recall $F(x) = f(x) + \mathbb{E}[\mathbb{I}_{X_{\xi}}(x)]$. 
	Then, under Assumption \ref{assump_basic}, the following relations hold:
	\begin{enumerate}
		\item[$(i)$] $F_{\mu}(x) \le F(x) \quad \forall x \in \rset^n$,
		\item[$(ii)$] $ F^* - F_{\mu}(x) \le \frac{\mu}{2}\mathbb{E}\left[\norm{g_F(x^*;\xi)}^2 \right] \le \frac{\mu}{2}\mathcal{S}^*_F \quad \forall x \in \rset^n.$ 
		\item[$(iii)$] Let linear regularity hold for $\{X_{\xi}\}_{\xi \in \Omega}$ with constant $\sigma_X > 0$, then \\
		$ F^* - F_{\mu}(x) \le  \mathbb{E}\left[\frac{\mu}{2}  \norm{g_f(x^*;\xi)}^2 \right] +  \frac{\mu}{2\sigma_X}\norm{g_f(x^*)}^2 \quad \forall x \in \rset^n.$ 	
	\end{enumerate}
\end{lemma}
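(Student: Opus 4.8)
The plan is to treat the three items in increasing order of difficulty, all built on the per-component envelope identity $F_\mu(x)=\mathbb{E}[F_\mu(x;\xi)]$ and the completion-of-squares estimate underlying the proximal operator. Part $(i)$ is immediate: in $F_\mu(x;\xi)=\min_z F(z;\xi)+\frac{1}{2\mu}\norm{z-x}^2$ the feasible choice $z=x$ gives $F_\mu(x;\xi)\le F(x;\xi)$, and taking expectations yields $F_\mu(x)\le F(x)$. For part $(ii)$ I would lower-bound each envelope by linearizing the component at an optimal point $x^*$: the subgradient inequality $F(z;\xi)\ge F(x^*;\xi)+\langle g_F(x^*;\xi),z-x^*\rangle$ (valid for indicator components as well, where $F(x^*;\xi)=0$ and $g_F\in\mathcal{N}_{X_\xi}(x^*)$) is substituted inside $\min_z$, and the resulting unconstrained quadratic is minimized in closed form at $z=x-\mu g_F(x^*;\xi)$, giving
\begin{equation*}
F_\mu(x;\xi)\ge F(x^*;\xi)+\langle g_F(x^*;\xi),x-x^*\rangle-\frac{\mu}{2}\norm{g_F(x^*;\xi)}^2.
\end{equation*}
Taking expectation, the constant term becomes $F^*$, the linear term vanishes since $\mathbb{E}[g_F(x^*;\xi)]=g_F(x^*)=0$ by Assumption \ref{assump_basic}$(iv)$, and rearranging together with Assumption \ref{assump_basic}$(iii)$ produces the stated chain.

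Part $(iii)$ is the crux, and it is where linear regularity enters. Here I would keep the smooth and the constraint components separate. For each smooth component $\xi\in\Omega_1$ I reuse the completion-of-squares bound above with $g_f$ in place of $g_F$; for each indicator component $\xi\in\Omega_2$ I instead use the exact value $F_\mu(x;\xi)=\frac{1}{2\mu}\text{dist}^2_{X_\xi}(x)$ and invoke the linear regularity inequality \eqref{lreg_definition} to obtain $\mathbb{E}_{\xi\in\Omega_2}\!\left[\frac{1}{2\mu}\text{dist}^2_{X_\xi}(x)\right]\ge\frac{\sigma_X}{2\mu}\text{dist}^2_X(x)$. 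Summing these contributions gives a lower bound of the form $f(x^*)+\langle g_f(x^*),x-x^*\rangle-\frac{\mu}{2}\mathbb{E}[\norm{g_f(x^*;\xi)}^2]+\frac{\sigma_X}{2\mu}\text{dist}^2_X(x)$.

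The main obstacle, which distinguishes $(iii)$ from $(ii)$, is that averaging only the smooth subgradients leaves a residual linear term $\langle g_f(x^*),x-x^*\rangle$ that does \emph{not} cancel, because $g_f(x^*)$ is balanced only against the dropped normal-cone part. To absorb it I would exploit the optimality relation $-g_f(x^*)=g_X(x^*)\in\mathcal{N}_X(x^*)$, rewrite $\langle g_f(x^*),x-x^*\rangle=\langle g_X(x^*),x^*-x\rangle$, and decompose $x^*-x=(x^*-\pi_X(x))+(\pi_X(x)-x)$. The first inner product is nonnegative by the defining property of the normal cone at $x^*$, and the second is bounded below by $-\norm{g_f(x^*)}\,\text{dist}_X(x)$ via Cauchy--Schwarz. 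Combining the residual bound $-\norm{g_f(x^*)}\,\text{dist}_X(x)$ with the regularity term $\frac{\sigma_X}{2\mu}\text{dist}^2_X(x)$ and minimizing the scalar quadratic $t\mapsto-\norm{g_f(x^*)}\,t+\frac{\sigma_X}{2\mu}t^2$ over $t=\text{dist}_X(x)\ge 0$ yields the lower bound $-\frac{\mu}{2\sigma_X}\norm{g_f(x^*)}^2$. Assembling the smooth contributions with this term, and using $F^*=f(x^*)$, gives exactly the bound in $(iii)$. I expect the normal-cone bookkeeping and the scalar quadratic minimization to be the only delicate points; everything else is routine.
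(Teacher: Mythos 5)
Your proposal is correct and follows essentially the same route as the paper's proof: part $(i)$ by evaluating the envelope at $z=x$, part $(ii)$ by the subgradient linearization at $x^*$ plus completion of squares and the vanishing mean $\mathbb{E}[g_F(x^*;\xi)]=0$, and part $(iii)$ by splitting $F_\mu$ into the smooth envelope and the averaged squared distances, handling the residual term $\langle g_f(x^*),x-x^*\rangle$ through the decomposition via $\pi_X(x)$, the optimality condition $-g_f(x^*)\in\mathcal{N}_X(x^*)$, Cauchy--Schwarz, and minimization of the scalar quadratic against the linear-regularity term $\frac{\sigma_X}{2\mu}\text{dist}_X^2(x)$. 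The only cosmetic difference is that you minimize the linearized quadratic in closed form at $z=x-\mu g$, whereas the paper evaluates at the prox point $z_\mu(x;\xi)$ and bounds the same expression from below, which amounts to the identical computation.
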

\begin{proof}
The proof can be found in Appendix.
\end{proof}


\noindent In order to establish the SPP convergence we provide the following relation.

\begin{theorem}\label{th_recurence}
Let Assumptions \ref{assump_basic} and \ref{assump_localgrowth} hold. The sequence $\{x^k\}_{k \ge 0}$ generated by SPP satisfies:
	\begin{align*}
\mathbb{E}\left[\text{dist}_{X^*}^2(x^{k+1})\right] 
 \! \le \!  \left[\prod_{i=0}^k (1 \!-\! \mu_i \sigma_{F,\mu_0})\right]\! \text{dist}_{X^*}^2(x^0) \!+\! (\mathcal{S}^*_F \!+\! 2\beta) \sum\limits_{i=0}^k \left[ \prod\limits_{j=i+1}^{k} (1 \!-\! \mu_j \sigma_{F,\mu_0})\right] \mu_i^2.
\end{align*}
\end{theorem}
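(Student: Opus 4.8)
The plan is to establish a one-step recursion for $\mathbb{E}[\text{dist}_{X^*}^2(x^{k+1})]$ conditioned on $x^k$, and then unroll it. Since $X^*_{\mu} \neq X^*$ in general, I will measure progress against a fixed optimal point of the \emph{original} problem rather than against the prox-step's own minimizer. First I would fix an arbitrary $x^* \in X^*$ and expand the squared distance after the update $x^{k+1} = z_{\mu_k}(x^k;\xi_k)$. Using $\text{dist}_{X^*}^2(x^{k+1}) \le \norm{x^{k+1}-x^*}^2$ together with the identity $x^{k+1} = x^k - \mu_k \nabla F_{\mu_k}(x^k;\xi_k)$ (the defining relation of the Moreau proximal step, valid since $\nabla F_{\mu}(x;\xi) = \frac{1}{\mu}(x - z_{\mu}(x;\xi))$), I get
\begin{align*}
\norm{x^{k+1}-x^*}^2 = \norm{x^k-x^*}^2 - 2\mu_k \langle \nabla F_{\mu_k}(x^k;\xi_k), x^k-x^*\rangle + \mu_k^2 \norm{\nabla F_{\mu_k}(x^k;\xi_k)}^2.
\end{align*}
Taking conditional expectation over $\xi_k$ and using Assumption \ref{assump_basic}$(ii)$ to interchange expectation and subdifferential turns the cross term into $-2\mu_k \langle \nabla F_{\mu_k}(x^k), x^k-x^*\rangle$, which by convexity of $F_{\mu_k}$ is bounded above by $-2\mu_k (F_{\mu_k}(x^k) - F_{\mu_k}(x^*))$.

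Next I would handle the two new pieces. For the cross term, I combine convexity with the weak linear regularity (Assumption \ref{assump_localgrowth}): writing $F_{\mu_k}(x^k)-F_{\mu_k}(x^*) = (F_{\mu_k}(x^k) - F^*_{\mu_k}) + (F^*_{\mu_k} - F_{\mu_k}(x^*))$, the first gap is at least $\frac{\sigma_{F,\mu_k}}{2}\text{dist}_{X^*}^2(x^k) - \mu_k\beta$, and the second is nonnegative since $F^*_{\mu_k}$ is the minimum; this produces the contraction factor $(1-\mu_k\sigma_{F,\mu_k})$ on $\text{dist}_{X^*}^2(x^k)$ plus an $O(\mu_k^2\beta)$ error. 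To get the stated form with $\sigma_{F,\mu_0}$ rather than $\sigma_{F,\mu_k}$, I invoke the monotonicity clause of Assumption \ref{assump_localgrowth} (the map $\mu \mapsto \sigma_{F,\mu}$ is nonincreasing) together with $\mu_k \le \mu_0$, so $\sigma_{F,\mu_k} \ge \sigma_{F,\mu_0}$, yielding a uniform contraction rate. For the quadratic term I need $\mathbb{E}[\norm{\nabla F_{\mu_k}(x^k;\xi_k)}^2] \le \mathcal{S}^*_F + (\text{something})$; this is the variance-control step, bounding the stochastic gradient of the Moreau envelope at $x^k$ by its value at the optimum (controlled by $\mathcal{S}^*_F$ via Assumption \ref{assump_basic}$(iii)$–$(iv)$) plus a term absorbed into $\beta$ using nonexpansiveness of the proximal map.

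Once the one-step inequality
\begin{align*}
\mathbb{E}\big[\text{dist}_{X^*}^2(x^{k+1}) \mid x^k\big] \le (1-\mu_k\sigma_{F,\mu_0})\,\text{dist}_{X^*}^2(x^k) + (\mathcal{S}^*_F + 2\beta)\mu_k^2
\end{align*}
is in hand, I would take full expectations and unroll the recursion by induction on $k$: the homogeneous part accumulates the product $\prod_{i=0}^k(1-\mu_i\sigma_{F,\mu_0})$ acting on the initial distance, while each error term $(\mathcal{S}^*_F+2\beta)\mu_i^2$ introduced at stage $i$ is thereafter multiplied by the remaining contraction factors $\prod_{j=i+1}^k(1-\mu_j\sigma_{F,\mu_0})$, giving exactly the stated double sum. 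The main obstacle will be the quadratic (variance) term: controlling $\mathbb{E}[\norm{\nabla F_{\mu_k}(x^k;\xi_k)}^2]$ so that its excess over $\mathcal{S}^*_F$ collapses precisely into the same $2\beta$ constant appearing in the weak linear regularity bound, rather than producing a distinct, possibly $x^k$-dependent, residual. This is where Assumption \ref{assump_basic}$(iv)$ (the integrable representation with $g_F(x^*)=0$) and the firm nonexpansiveness of $z_{\mu}$ must be used carefully to keep the bookkeeping clean and match the exact coefficient $\mathcal{S}^*_F + 2\beta$.
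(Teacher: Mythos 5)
Your overall skeleton (a one-step contraction followed by unrolling) matches the paper, and your target inequality
\[
\mathbb{E}\big[\text{dist}_{X^*}^2(x^{k+1}) \,|\, x^k\big] \le (1-\mu_k\sigma_{F,\mu_0})\,\text{dist}_{X^*}^2(x^k) + (\mathcal{S}^*_F + 2\beta)\mu_k^2
\]
is exactly the one the paper establishes; the unrolling step is also fine. However, your derivation of the one-step bound has two gaps. The first is a sign error: in the decomposition $F_{\mu_k}(x^k)-F_{\mu_k}(x^*) = (F_{\mu_k}(x^k) - F^*_{\mu_k}) + (F^*_{\mu_k} - F_{\mu_k}(x^*))$, the second bracket is \emph{nonpositive}, not nonnegative, precisely because $F^*_{\mu_k}$ is the minimum of $F_{\mu_k}$; you cannot drop it. It must be lower-bounded by $-\frac{\mu_k}{2}\mathcal{S}^*_F$ using Lemma \ref{lemma_auxres}$(i)$--$(ii)$ (since $F_{\mu_k}(x^*) - F^*_{\mu_k} \le F^* - F_{\mu_k}(x^*_{\mu_k}) \le \frac{\mu_k}{2}\mathcal{S}^*_F$), and this is exactly where $\mathcal{S}^*_F$ enters the paper's constant --- not through the quadratic term, as you propose.

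The second, deeper gap is the variance term $\mu_k^2\,\mathbb{E}\left[\norm{\nabla F_{\mu_k}(x^k;\xi_k)}^2\right]$ produced by your SGD-style expansion, which you flag as the main obstacle but do not resolve --- and it cannot be resolved along the lines you sketch. Nonexpansiveness of the proximal map gives at best $\norm{\nabla F_{\mu_k}(x^k;\xi)} \le \norm{\nabla F_{\mu_k}(x^*;\xi)} + \frac{1}{\mu_k}\norm{x^k-x^*}$, so after squaring and multiplying by $\mu_k^2$ you are left with a term of order $\text{dist}_{X^*}^2(x^k)$ carrying an absolute constant (roughly $2\,\text{dist}_{X^*}^2(x^k)$), which does not shrink with $\mu_k$, cannot be absorbed into the constant $\beta$ (it depends on $x^k$), and overwhelms the contraction $-\mu_k\sigma_{F,\mu_0}\text{dist}_{X^*}^2(x^k)$. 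The paper never faces this term: instead of expanding the explicit update with the Moreau gradient evaluated at $x^k$, it uses that $x^{k+1}$ minimizes the $\frac{1}{\mu_k}$-strongly convex function $F(\cdot;\xi_k) + \frac{1}{2\mu_k}\norm{\cdot - x^k}^2$, which yields the three-point inequality
\[
F(\pi_{X^*}(x^k);\xi_k) + \tfrac{1}{2\mu_k}\norm{\pi_{X^*}(x^k) - x^k}^2 \ge F_{\mu_k}(x^k;\xi_k) + \tfrac{1}{2\mu_k}\norm{x^{k+1} - \pi_{X^*}(x^k)}^2,
\]
so the quadratic term is cancelled exactly (equivalently: convexity is invoked at the implicit point $x^{k+1}$, via $\frac{1}{\mu_k}(x^k - x^{k+1}) \in \partial F(x^{k+1};\xi_k)$, which produces a compensating $-\norm{x^k-x^{k+1}}^2$). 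Taking expectation then gives $\mathbb{E}_{\xi_k}\norm{x^{k+1}-\pi_{X^*}(x^k)}^2 \le \text{dist}_{X^*}^2(x^k) + 2\mu_k\left(F^* - F_{\mu_k}(x^k)\right)$, after which Lemma \ref{lemma_auxres}$(ii)$, Assumption \ref{assump_localgrowth}, and the monotonicity of $\mu \mapsto \sigma_{F,\mu}$ produce the one-step bound. Replacing your explicit expansion by this implicit inequality is the missing idea; without it, the stated bound is out of reach under the paper's assumptions, which deliberately avoid bounded gradients away from $X^*$.
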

\begin{proof}
Note that $F(\cdot;\xi) + \frac{1}{2\mu}\norm{\cdot - x}^2$ is strongly convex with constant $\frac{1}{\mu}$ therefore:
\begin{align*}
 F(\pi_{X^*} & (x^k)  ;\xi_k)  + \frac{1}{2\mu_k}\norm{\pi_{X^*}(x^k) - x^k}^2 \\
&  \ge F(z_{\mu_k}(x^k;\xi_k);\xi_k) + \frac{1}{2\mu_k}\norm{z_{\mu_k}(x^k;\xi_k) - x^k}^2 + \frac{1}{2\mu_k}\norm{z_{\mu_k}(x^k;\xi_k) - \pi_{X^*}(x^k)}^2 \\
& = F_{\mu_k}(x^k;\xi_k) + \frac{1}{2\mu_k} \norm{z_{\mu_k}(x^k;\xi_k) - \pi_{X^*}(x^k)}^2.
\end{align*}	
By taking expectation over $\xi_k$ in both sides, then for any $x^*_{\mu_k} \in X^*_{\mu_k}$ we obtain:
\begin{align}
\mathbb{E}_{\xi_k} \left[\norm{x^{k+1} - \pi_{X^*}(x^k)}^2\right] 
& \le \norm{x^k - \pi_{X^*}(x^k)}^2 + 2\mu_k \left(F(\pi_{X^*}(x^k)) - F_{\mu_k}(x^k) \right) \nonumber \\ 
& = \text{dist}_{X^*}^2(x^k) + 2\mu_k \left(F^* - F_{\mu_k}(x^*_{\mu_k}) \right) + 2\mu_k \left(F^*_{\mu_k} - F_{\mu_k}(x^k) \right) \nonumber \\
& \hspace{-45pt} \overset{Lemma \; \ref{lemma_auxres}\; (ii)}\le \text{dist}_{X^*}^2(x^k) +  \mu_k^2 \mathbb{E}_{\xi} \left[ \norm{g_F(x^*;\xi)}^2\right] + 2\mu_k \left(F^*_{\mu_k} - F_{\mu_k}(x^k) \right) \nonumber \\
& \hspace{-50pt}  \overset{Assumption \;  \ref{assump_localgrowth}}{\le} (1 - \mu_k \sigma_{F,\mu_k})\text{dist}_{X^*}^2(x^k)  + \mu_k^2\left(\mathbb{E}_{\xi}\left[ \norm{g_f(x^*;\xi)}^2\right] +  2 \beta \right). \label{th_recurrence_rel1}
\end{align}
By observing that $\norm{x^{k+1} - \pi_{X^*}(x^k)} \ge \text{dist}_{X^*}(x^{k+1})$ then by taking full expectation in both sides of \eqref{th_recurrence_rel1} we get:
\begin{align}
\mathbb{E}\left[\text{dist}_{X^*}^2(x^{k+1})\right] 
& \le  (1 - \mu_k \sigma_{F,\mu_k}) \mathbb{E}\left[\text{dist}_{X^*}^2(x^k)\right] + \mu_k^2\left( \mathbb{E}\left[\norm{g_F(x^*;\xi)}^2\right] + 2 \beta\right)\nonumber \\
& \le  (1 - \mu_k \sigma_{F,\mu_0}) \mathbb{E}\left[\text{dist}_{X^*}^2(x^k)\right] + \mu_k^2 \left( \mathbb{E}\left[\norm{g_F(x^*;\xi)}^2\right] + 2 \beta\right), \nonumber 
	\end{align}
where we used that $\sigma_{F,\mu_k} \ge \sigma_{F,\mu_0}$, since $\sigma_{F,\mu} $ is nonincreasing in $\mu$.
Now for simplicity if we denote $\theta_k = (1 - \mu_k\sigma_{F,\mu_0})$, then we can further derive:
	\begin{align*}
	\mathbb{E}\left[\text{dist}_{X^*}^2(x^{k+1})\right] 
	& \le  \theta_k \mathbb{E}\left[\text{dist}_{X^*}^2(x^{k})\right] + \mu_k^2 (\mathcal{S}^*_F + 2\beta) \\
	& \le  \left(\prod_{i=0}^k \theta_i\right) \text{dist}_{X^*}^2(x^0) + (\mathcal{S}^*_F + 2\beta) \sum\limits_{i=0}^k \left(\prod\limits_{j=i+1}^{k} \theta_j\right) \mu_i^2,
	\end{align*}
which confirms our result.	
\end{proof} 
\begin{remark}
The universal upper bound provided in Theorem \ref{th_recurence} will be used to generate sublinear rate for non-interpolation context, i.e. $\mathcal{S}_F^* + \beta > 0$, and linear convergence rates for constant stepsize SPP under interpolation assumption, i.e. $\mathcal{S}_F^* = \beta = 0$. 
\end{remark}


\subsection{Sublinear convergence rate}

\noindent The sublinear convergence rate for SPP under the weak linear regularity property can be easily obtained from Theorem \ref{th_recurence}. 

\begin{theorem}\label{th_sublinearrate}
Let Assumptions \ref{assump_basic}, \ref{assump_localgrowth} hold. Also let the decreasing stepsize sequence $\mu_k = \frac{\mu_0}{k^{\gamma}}$ and $\{x^k\}_{k \ge 0}$ be the sequence generated by SPP($x^0,\{\mu_k\}_{k \ge 0}$). Then, for any $k > 0$, the following relation holds:
\begin{align*}
(i) \; \text{If} \; \gamma \in (0,1): \qquad 
\mathbb{E}[\norm{ x^{k} - x^*}^2]
\le \mathcal{O}\left(\frac{1}{k^{\gamma}}\right)
\end{align*}
\begin{align*}
(ii) \; \text{If} \; \gamma = 1: \qquad 
\mathbb{E}[\norm{ x^{k} - x^*}^2] \le
\begin{cases}
\mathcal{O}\left(\frac{1}{ k}\right)     & \text{if} \; \mu_0\sigma_{F,\mu_0}>e-1 \\
\mathcal{O}\left(\frac{\ln{k}}{ k}\right)     & \text{if} \; \mu_0\sigma_{F,\mu_0}=e-1\\
\mathcal{O}\left(\frac{1}{k}\right)^{2 \ln
	(1 + \mu_0\sigma_{F,\mu_0})}  & \text{if} \; \mu_0\sigma_{F,\mu_0} < e-1.
\end{cases}
\end{align*}
\end{theorem}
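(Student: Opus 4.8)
The plan is to read the bound of Theorem~\ref{th_recurence} as a scalar recursion and extract its asymptotics in the two stepsize regimes separately. Writing $a_k := \mathbb{E}[\text{dist}_{X^*}^2(x^k)]$, $q := \mu_0\sigma_{F,\mu_0}$ and $c := \mathcal{S}^*_F + 2\beta$, the one-step estimate established inside the proof of Theorem~\ref{th_recurence} reads $a_{k+1} \le (1-\mu_k\sigma_{F,\mu_0})a_k + \mu_k^2 c$, so with $\mu_k=\mu_0/k^\gamma$ the contraction factor is $1-q/k^\gamma$ and the forcing term is $c\mu_0^2/k^{2\gamma}$. The homogeneous (initial-condition) part $[\prod_i(1-q/i^\gamma)]a_0$ is controlled via $1-x\le e^{-x}$ by $\exp(-q\sum_i i^{-\gamma})a_0$, which decays super-polynomially fast for $\gamma\in(0,1)$ and at least like a power of $k$ for $\gamma=1$; hence the rate is governed by the accumulated noise, and the whole argument reduces to estimating $c\mu_0^2\sum_i[\prod_{j=i+1}^k(1-q/j^\gamma)]\,i^{-2\gamma}$.

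For $\gamma\in(0,1)$ I would prove $a_k\le Ck^{-\gamma}$ by induction. Assuming the bound at step $k$, it suffices to verify $(1-qk^{-\gamma})Ck^{-\gamma}+c\mu_0^2k^{-2\gamma}\le C(k+1)^{-\gamma}$, equivalently $C[k^{-\gamma}-(k+1)^{-\gamma}]\le(qC-c\mu_0^2)k^{-2\gamma}$. Since $k^{-\gamma}-(k+1)^{-\gamma}\le\gamma k^{-\gamma-1}$ and $\gamma+1>2\gamma$ precisely because $\gamma<1$, the left side is of strictly smaller order than the right once $C>c\mu_0^2/q$; a finite number of initial terms is absorbed by enlarging $C$. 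This yields $\mathbb{E}[\text{dist}_{X^*}^2(x^k)]\le\mathcal{O}(k^{-\gamma})$.

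The case $\gamma=1$ is the delicate one and is where the threshold $e-1$ originates. Here each factor obeys $1-q/j\le(1+q/j)^{-1}$, so $\prod_{j=i+1}^k(1-q/j)\le\exp(-\sum_{j=i+1}^k\ln(1+q/j))$. The key device is the concavity of $t\mapsto\ln(1+t)$ on $[0,q]$, which gives the chord bound $\ln(1+q/j)\ge\frac{\ln(1+q)}{q}\cdot\frac{q}{j}=\frac{\ln(1+q)}{j}$ for $j\ge1$; summing and comparing with $\int 1/t\,dt$ produces $\prod_{j=i+1}^k(1-q/j)\le((i+1)/(k+1))^{\ln(1+q)}$. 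Substituting this into the noise term reduces everything to the series $\sum_i i^{\ln(1+q)-2}$, and the trichotomy is exactly its behaviour: it diverges like $k^{\ln(1+q)-1}$ when $\ln(1+q)>1$, like $\ln k$ when $\ln(1+q)=1$, and converges when $\ln(1+q)<1$. Because $\ln(1+q)\gtrless1\Leftrightarrow q\gtrless e-1$, these three regimes yield $\mathcal{O}(1/k)$, $\mathcal{O}(\ln k/k)$, and a pure power $\mathcal{O}((1/k)^{\,\text{power}})$ governed by $\ln(1+\mu_0\sigma_{F,\mu_0})$.

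The main obstacle is the $\gamma=1$ analysis: obtaining the sharp threshold requires the concavity/chord step above rather than the cruder $1-x\le e^{-x}$ bound, which would replace $\ln(1+q)$ by $q$ and misplace the threshold at $1$. The careful constant bookkeeping in summing $(k+1)^{-\ln(1+q)}\sum_i i^{\ln(1+q)-2}$ against the homogeneous contribution is what pins down the exact subcritical exponent $2\ln(1+\mu_0\sigma_{F,\mu_0})$. A secondary technical point, common to both parts, is that the factors $1-q/i^\gamma$ need not lie in $[0,1)$ for the first few indices; this is handled by starting the estimates from an index large enough that $q/i^\gamma<1$ and absorbing the finitely many initial terms into the $\mathcal{O}(\cdot)$ constant. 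Finally, passing from $\mathbb{E}[\text{dist}_{X^*}^2(x^k)]$ to the stated $\mathbb{E}[\norm{x^k-x^*}^2]$ uses that the distance to $X^*$ is realised at $\pi_{X^*}(x^k)$.
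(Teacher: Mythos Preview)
Your approach is correct and reaches the same destination, but the route differs from the paper's in how the accumulated noise sum is handled.

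Both you and the paper reduce the product $\prod_j(1-q/j^{\gamma})$ to $(1+q)^{-\sum_j j^{-\gamma}}$: the paper invokes the Bernoulli inequality $1-tx\le(1+x)^{-t}$ for $t\in[0,1]$, while your concavity-chord bound $\ln(1+q/j)\ge\ln(1+q)/j$ is the logarithmic form of the same inequality, so this step is identical in substance.

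The genuine difference is downstream. The paper treats both regimes $\gamma\in(0,1)$ and $\gamma=1$ with a single device: split the noise sum at an intermediate index $m$, bound the head by the product estimate times $\varphi_{1-2\gamma}(m)$, and collapse the tail via the telescoping identity $\sum_{i>m}[\prod_{j>i}\theta_j]\,\mu_i\sigma_{F,\mu_0}=1-\prod_{j>m}\theta_j\le 1$, which gives the tail bound $\mu_{m+1}/\sigma_{F,\mu_0}$; the final case distinctions are then delegated to \cite[Corollary~15]{PatNec:18}. You instead run an explicit induction for $\gamma<1$ and a direct series comparison for $\gamma=1$. Your argument is more self-contained (no external reference needed) and makes the mechanism behind the $e-1$ threshold completely transparent; the paper's split-and-telescope is more uniform across $\gamma$ and yields the $\mathcal{O}(\mu_{m+1}/\sigma_{F,\mu_0})$ tail in one stroke without any series analysis.

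One small caveat: your series computation for the subcritical case $\gamma=1$, $q<e-1$ actually produces the exponent $\ln(1+q)$ (both the homogeneous and the noise contributions are $\mathcal{O}(k^{-\ln(1+q)})$), not the $2\ln(1+q)$ you quote to match the statement; you should either justify where the extra factor of $2$ enters or flag the discrepancy.
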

\begin{proof}
For simplicity denote $\theta_k = (1 - \mu_k\sigma_{F,\mu_0})$ , then Theorem \ref{th_recurence} implies that:
	\begin{align*}
	\mathbb{E}\left[\text{dist}_{X^*}^2(x^{k+1})\right] 
	& \le  \left(\prod_{i=0}^k \theta_i\right) \text{dist}_{X^*}^2(x^0) + \left(\mathcal{S}^*_F+\beta \right) \sum\limits_{i=0}^k \left(\prod\limits_{j=i+1}^{k} \theta_j\right) \mu_i^2.
	\end{align*}
	By using the Bernoulli inequality $ 1- tx \le \frac{1}{1 + tx} \le (1 + x)^{-t}$ for $t \in [0,1], x \ge 0$, then we have:
	\begin{align}\label{bern_conseq1}
	\prod\limits_{i=l}^u \theta_i 
	& = \prod\limits_{i=l}^u \left(1 - \frac{\mu_0}{i^{\gamma}} \sigma_{F,\mu_0}\right)  \le \prod\limits_{i=l}^u (1 + \mu_0 \sigma_{F,\mu_0})^{-1/i^{\gamma}} 
	=  (1 + \mu_0 \sigma_{F,\mu_0})^{- \sum\limits_{i=l}^u \frac{1}{i^{\gamma}}}.
	\end{align}
	On the other hand, if we use the lower bound
$	\sum\limits_{i=l}^u \frac{1}{i^{\gamma}} \ge \int\limits_{l}^{u + 1} \frac{1}{\tau^{\gamma}} d\tau =  \varphi_{1-\gamma}(u+1) - \varphi_{1-\gamma}(l)$ 	then we derive:
	\begin{align}
	& \sum\limits_{i=0}^k \left(\prod\limits_{j=i+1}^{k} \theta_j\right) \mu_i^2  = \sum\limits_{i=0}^m \left(\prod\limits_{j=i+1}^{k} \theta_j\right) \mu_i^2 + \sum\limits_{i=m+1}^k \left(\prod\limits_{j=i+1}^{k} \theta_j\right) \mu_i^2 \nonumber \\
	& \overset{\eqref{bern_conseq1}}{\le} \sum\limits_{i=0}^m (1 + \mu_0 \sigma_{F,\mu_0})^{  \varphi_{1-\gamma}(i+1) - \varphi_{1-\gamma}(k)  } \mu_i^2 + \mu_{m+1} \sum\limits_{i=m+1}^k \left[\prod\limits_{j=i+1}^{k} (1 - \mu_j\sigma_{F,\mu_0}) \right] \mu_i \label{recc_decompose} \end{align}
	Furthermore, by using that:
	\begin{align}\label{recc_aux1}
	\sum\limits_{i=0}^m  \mu_i^2 \le \mu_0^2 \sum\limits_{i=0}^m  \frac{1}{i^{2\gamma}} \le \frac{m^{1- 2\gamma} - 1}{1 - 2\gamma} = \varphi_{1 - 2\gamma}(m) 
	\end{align}
	and that 
	\begin{align}\label{recc_aux2}
 & \sum\limits_{i=m+1}^k \left[\prod\limits_{j=i+1}^{k} (1 - \mu_j\sigma_{F,\mu_0}) \right] (1 - (1- \sigma_{F,\mu_0}\mu_i)) \nonumber \\
 & = \sum\limits_{i=m+1}^k \left[\prod\limits_{j=i+1}^{k} (1 - \mu_j\sigma_{F,\mu_0})  - \prod\limits_{j=i}^{k} (1 - \mu_j\sigma_{F,\mu_0}) \right]  =  1  - \prod\limits_{j=m+1}^{k} (1 - \mu_j\sigma_{F,\mu_0})   \le  \frac{\mu_{m+1}}{\sigma_{F,\mu_0}}.
	\end{align}
	By denoting the second constant $\tilde{\theta}_0 = \frac{1}{1+\mu_0 \sigma_{F,\mu_0}}$, then \eqref{recc_decompose}-\eqref{recc_aux2} implies:
	\begin{align*}
	& \mathbb{E}\left[\text{dist}_{X^*}^2(x^{k+1})\right] 
	\le \tilde{\theta}_0^{\varphi_{1-\gamma}(k)} \text{dist}_{X^*}^2(x^0) \\
	& \hspace{2cm} + \tilde{\theta}_0^{  \varphi_{1-\gamma}(k) - \varphi_{1-\gamma}(m)  } \varphi_{1 - 2\gamma}(m)(\mathcal{S}^*_F+\beta) + \frac{\mu_{m+1}}{\sigma_{F,\mu_0}} (\mathcal{S}^*_F+\beta).
	\end{align*}
	To derive the explicit case-wise convergence rate order we analyze upper bounds on function $\phi$ and follow similar steps as in \cite[Corrolary 15]{PatNec:18}. 

\end{proof}

\noindent A similar convergence rate result can be found \cite{PatNec:18} under the Lipschitz gradient and strong convexity assumptions. However, our analysis is much simpler and requires only weak linear regularity, which holds even for some particular nonsmooth non-strongly convex objective functions. 

\subsection{Linear convergence rate}

We show that the sublinear rate can be further improved under additional stronger assumptions related to the interpolation setting.
\begin{assumption}\label{assumpt_shared_min}
The functional components $F(\cdot;\xi)$ share common minimizers, i.e. for any $x^* \in X^*$
\begin{align*}
   0 \in \partial F(x^*;\xi) \qquad \forall \xi \in \Omega.
\end{align*}
\end{assumption}
\noindent The interpolation condition is typical for CFPs, where is aimed to find a common point of a collection of convex sets, i.e. $f(\cdot;\xi) = \mathbb{I}_{X_{\xi}}(\cdot)$ and $X^* = \bigcap_{\xi \in \Omega} X_{\xi}$. 
Examples satisfying Assumption \ref{assumpt_shared_min} are:
\begin{align*}
(i) & \; F(x;\xi) = \mathbb{I}_{H_{\xi}}(x), \qquad H_{\xi} = \{x: a_{\xi}x \le b_{\xi}\},\\
(ii) & \; F(x;\xi) = (a_{\xi}x - b_{\xi})^2 
\end{align*}
assuming that there exists $z$ such that: for $(i)$ $a_{\xi}z \le b_{\xi}, \forall \xi$, for $(ii)$ $a_{\xi}z = b_{\xi}, \forall \xi$.

\noindent In \cite{MaBas:18}, the linear convergence of SGD has been extensively analyzed for the interpolation least-squares problems. An immediate consequence of Assumption \ref{assumpt_shared_min} is that given any optimal $x^*$ we can find  subgradients $g_F(x^*;\xi)$ for each $\xi$ such that 
$\mathbb{E} \left[\norm{g_F (x^*;\xi)}^2 \right]  =  0, \forall x^* \in X^*. $
Further by taking into account that the Moreau envelope preserves the set of minimizers corresponding to each functional component, then we have 
\begin{align*}
X^* = X^*_{\mu} \quad \text{and} \quad \mathbb{E} \left[\norm{\nabla f_{\mu}(x^*;\xi)}^2 \right] = 0 \qquad \forall x^* \in X^*, \mu>0. 
\end{align*}
This fact implies that the decaying stepsize of the SPP iteration is not necessary any more. A straightforward application of Theorem \ref{th_recurence} leads to the following constant decrease:
\begin{corrolary}\label{corr_linconv}
Let Assumption \ref{assump_localgrowth} hold with $\beta = 0$ . If also the Assumption \ref{assumpt_shared_min} holds, then Theorem \ref{th_recurence} implies that the sequence $\{x^k\}_{k \ge 0}$ generated by constant stepsize SPP satisfies:
\begin{align*}
\mathbb{E} \left[\text{dist}_{X^*}^2(x^{k})\right] 
\le (1 - \mu\sigma_{F,\mu})^k \mathbb{E} \left[\text{dist}_{X^*}^2(x^0)\right].
\end{align*}
\end{corrolary}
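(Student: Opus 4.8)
The plan is to read the result directly off Theorem \ref{th_recurence} by checking that both driving constants vanish and that the product collapses under a constant stepsize. First I would argue that Assumption \ref{assumpt_shared_min} forces the noise budget $\mathcal{S}^*_F$ to be zero. Indeed, shared minimizers give $0 \in \partial F(x^*;\xi)$ for every $\xi$, so for each fixed $x^* \in X^*$ one may select the subgradient mapping $g_F(x^*;\xi) \equiv 0$; this choice is admissible under Assumption \ref{assump_basic}$(iv)$ (it averages to $g_F(x^*) = 0$) and yields $\mathbb{E}[\norm{g_F(x^*;\xi)}^2] = 0$. Hence the constant of Assumption \ref{assump_basic}$(iii)$ can be taken as $\mathcal{S}^*_F = 0$.

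Next, combining this with the hypothesis $\beta = 0$ gives $\mathcal{S}^*_F + 2\beta = 0$, so the entire summation term on the right-hand side of Theorem \ref{th_recurence} vanishes, leaving only the product term multiplying $\text{dist}_{X^*}^2(x^0)$. I would then specialize to the constant stepsize $\mu_k \equiv \mu$, for which $\mu_0 = \mu$ and therefore $\sigma_{F,\mu_0} = \sigma_{F,\mu}$. The surviving factor $\prod_{i=0}^k (1 - \mu_i \sigma_{F,\mu_0})$ then collapses to $(1 - \mu \sigma_{F,\mu})^{k+1}$, and a shift of the iteration index (applying the recurrence through step $k-1$ rather than step $k$) produces the claimed exponent $k$.

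There is no genuine obstacle here: the corollary is a direct specialization of Theorem \ref{th_recurence}, and the only point requiring care is justifying $\mathcal{S}^*_F = 0$, i.e. that the interpolation hypothesis eliminates the stochastic variance term. This has in effect already been recorded in the discussion preceding the statement, where it is noted that $X^* = X^*_\mu$ and $\mathbb{E}[\norm{\nabla f_\mu(x^*;\xi)}^2] = 0$ under shared minimizers, so I would simply invoke that observation. A secondary detail worth stating explicitly is that a constant sequence is a legitimate instance of the nonincreasing stepsize schedule required by the theorem, so no additional monotonicity argument (beyond the equality $\sigma_{F,\mu_0} = \sigma_{F,\mu}$) is needed.
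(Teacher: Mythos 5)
Your proposal is correct and follows exactly the paper's own route: the paper likewise treats this corollary as an immediate specialization of Theorem \ref{th_recurence}, first observing (in the discussion preceding the statement) that Assumption \ref{assumpt_shared_min} allows choosing subgradients with $\mathbb{E}\left[\norm{g_F(x^*;\xi)}^2\right] = 0$ so that the variance term vanishes, and then noting that with $\beta = 0$ and constant stepsize the product term yields the geometric factor $(1-\mu\sigma_{F,\mu})^k$. Your added remarks on the admissibility of the zero-subgradient selection under Assumption \ref{assump_basic}$(iv)$ and on the constant sequence being a valid nonincreasing schedule are fine bookkeeping but do not change the argument.
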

\noindent As proved in the previous sections, the indicator functions w.r.t. linearly regular sets, the restricted strongly convex functions and some particularly structured quadratically growing functions satisfy the Assumption \ref{assump_localgrowth} with $\beta = \mathcal{O}\left( \mathbb{E}[\norm{g_F(x^*;\xi)}^2]\right)$, which is possibly vanishing in the interpolation context when the Assumption \ref{assumpt_shared_min} holds. 
It seems that using other analysis from \cite{PatNec:18,TouTra:16} cannot be guaranteed that SPP converges linearly in the interpolation settings. The work \cite{AsiDuc:18} is centered on behavior of vanishing stepsize SPP for interpolation problems and obtain impressive complexity and stability results. However, we focus on quadratic growth relaxations of strong convexity assumption which allow the a unified treatment of interpolation and non-interpolation contexts.

\noindent Let us consider CFP case $f = 0$. In this case the SPP iteration becomes the vanilla SAP $ x^{k+1} = \pi_{X_{\xi_k}}(x^k).$
As we have shown in \eqref{cfp_qg}, we have $ \mu \sigma_{F,\mu} = \sigma_X$
and by Corollary \ref{corr_linconv} we recover the widely known linear convergence rate (see \cite{Ned:10,NecRic:18}):
$$ \mathbb{E} \left[\text{dist}_{X^*}^2(x^{k})\right] 
\le (1 - \sigma_X)^k \mathbb{E} \left[\text{dist}_{X^*}^2(x^0)\right].
$$

\begin{remark}
Although the linear convergence from Corrolary \ref{corr_linconv} is fairly known, this section prove that our analysis recover standard linear convergence results from the literature, 
on particular CFPs or interpolation models.
\end{remark}	

\section{Numerical experiment}

Let the linearly constrained regression problem: $\min_x \left\{ \frac{1}{2m}\norm{Tx-y}^2_2 \;|\;  \text{s.t.} \;  C x \le d  \right\}$. Denoting the halspaces $H_{\xi} = \{x \in \rset^n \; | \; c_{\xi} \cdot x \le d_{\xi}\}$, we test our algorithm on the following model:
\begin{align}
\min\limits_{x \in \rset^n} \; & \; \mathbb{E}_{\xi \in \Omega_1}\left[\frac{1}{2}\|T_{\xi}x-y_{\xi}\|^2_2\right] +  \mathbb{E}_{\xi \in \Omega_2}\left[\mathbb{I}_{H_{\xi}}(x) \right]\label{eq:strongprob}
\end{align}
where $T \in \rset^{m \times n}, x \in \rset^n, C \in \rset^{p \times n}$. The objective function and constraints of \eqref{eq:strongprob} is defined by an average of $m+p$ functions over $\Omega = \{1, 2, \cdots, m+p\}$. We generate random data  using a standard normal distribution.
For $m= 32, n = 40, p = 200$, below we show the convergence curves of SPP variants using stepsizes $\{\frac{1}{k}, \frac{1}{k^{2/3}}, \frac{1}{k^{1/2}}\}$.
With identical parameters and initialization we take the average over 5 rounds of each SPP scheme.

\begin{figure*}[t]\label{fig:test}
	\centering
	\includegraphics[width=0.7\textwidth]{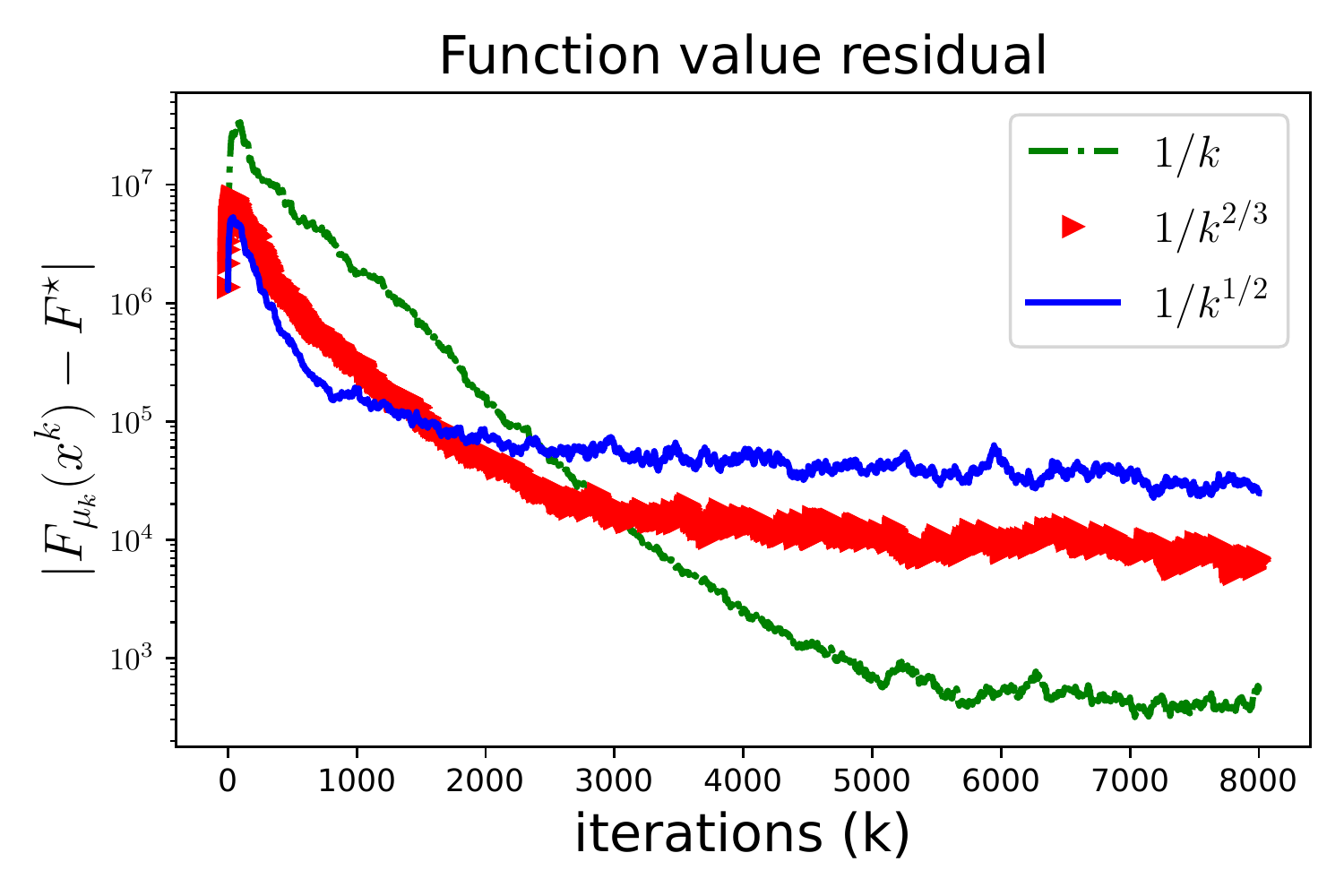}
	\includegraphics[width=0.7\textwidth]{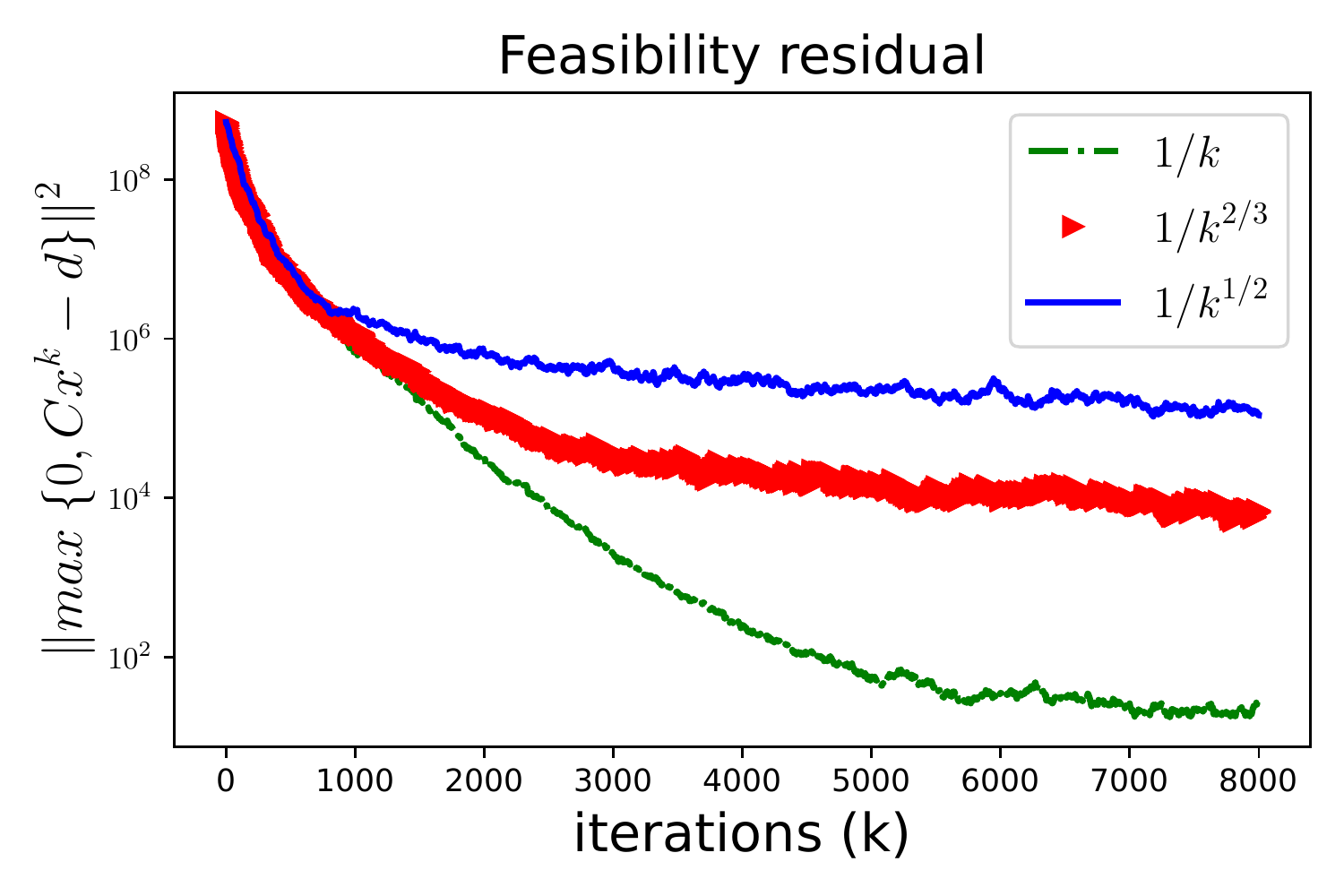}
	\caption{Given $m= 32, n = 40, p = 200$, we plotted in the top the convergence of SPP variants using stepsizes $\{\frac{1}{k}, \frac{1}{k^{2/3}}, \frac{1}{k^{1/2}}\}$, in  $|F_{\mu_k}(x^k) - F^*|$. In the bottom plot we show convergence feasibility residual.
	}
\end{figure*}

\noindent The lack of strong convexity and the presence of constraints in \eqref{eq:strongprob} leads us to examine the evolution of a measure casting together the function value residual and a feasibility penalty. Note that, for particular model \eqref{eq:strongprob}, the envelope
$$F_{\mu}(x) = \mathbb{E} \left[ \frac{(T_{\xi}x - y_{\xi})^2}{1 + \mu \norm{T_{\xi}}^2} + \frac{1}{2\mu} (\max\{0,C_{\xi}x-d_{\xi}\})^2 \right], $$
is composed of a weighted sum of quadratic components (from objective function of \eqref{eq:strongprob}) and a feasibility penalty term. A small positive $\mu$ take the first term close to objective function $\norm{Tx-y}^2$.
Thus, in the top of Figure \ref{fig:test} we plot the convergence of  $|F_{\mu_k}(x^k) - F^*|$ for the three schemes. We use CVX to determine $F^\star$ with $\varepsilon=10^{-6}$. In the bottom plot we show convergence of feasibility violation residual. Notice that the SPP with $\frac{1}{k}$ stepsize has the best performance and overall, larger is the stepsize exponent, the better is the convergence rate, which represents a confirmation of the results of Theorem \ref{th_sublinearrate}.


\section*{Funding}

This work was supported by BRD Groupe Societe Generale through Data Science Research Fellowships of 2019.





\section{Appendix }

\begin{proof}[Proof of Lemma \ref{lemma_auxres}]
	It is straightforward that 
	\begin{align*}
	F_{\mu}(x;\xi) = \min\limits_{z\in \rset^n} \; F(z;\xi) + \frac{1}{2\mu}\norm{z - x}^2 \le F(x;\xi) \qquad \forall x \in \rset^n.
	\end{align*}	
	By taking expectation w.r.t. $\xi$ in both sides we get $(i)$. In order to prove $(ii)$, let $z \in \rset^n$. Then, given $x^* \in X^*$ and $g_F(x^*;\xi) \in \partial F(x^*;\xi)$, by convexity of $f(\cdot;\xi)$ we have:
	\begin{align*}
	F^* - F_{\mu}(x)  & = \mathbb{E}\left[  F(x^*;\xi) - F(z_{\mu}(x;\xi);\xi)  - \frac{1}{2\mu}\norm{z_{\mu}(x;\xi) - x}^2 \right] \nonumber\\
	& \le \mathbb{E}\left[\langle g_F(x^*;\xi), x^* - z_{\mu}(x;\xi) \rangle - \frac{1}{2\mu}\norm{z_{\mu}(x;\xi) - x}^2\right] \nonumber\\
	& \le \mathbb{E}\left[\langle g_F(x^*;\xi), x^* - x \rangle + \langle g_F(x^*;\xi), x - z_{\mu}(x;\xi) \rangle - \frac{1}{2\mu}\norm{z_{\mu}(x;\xi) - x}^2\right] \nonumber\\
	& \le \mathbb{E}\left[\langle g_F(x^*;\xi), x^* - x \rangle + \max_{z} \; \langle g_F(x^*;\xi), x - z \rangle - \frac{1}{2\mu}\norm{z - x}^2\right] \nonumber\\
	& \le \langle \mathbb{E}\left[ g_F(x^*;\xi) \right], x^* - x \rangle  + \mathbb{E} \left[  \frac{\mu}{2}\norm{g_F(x^*;\xi)}^2\right] \quad \forall x^* \in X^*,
	\end{align*}
	where we recall that, based on Assumption \ref{assump_basic}$(iv)$, $\mathbb{E}\left[ g_F(x^*;\xi) \right] = 0 $. Therefore, we finally obtain
	\begin{align*}
	F^* - F_{\mu}(x)  
	& \le \frac{\mu}{2}\mathbb{E}\left[ \norm{g_F(x^*;\xi)}^2\right] \le \mathcal{S}^*_F.	
	\end{align*}	
	which confirms result $(ii)$.
	For the third part $(iii)$, denote $D_{\mu}(x) := \mathbb{E}\left[ \frac{1}{2\mu}\text{dist}_{X_{\xi}}^2(x;\xi) \right]$. Assumption \ref{assump_basic} $(iv)$ imply that each $g_f(x^*) \in \partial f(x^*)$ has a representation 
	\begin{align}\label{stoch_repr}
	g_f(x^*) = \mathbb{E}\left[ g_f(x^*;\xi) \right]
	\end{align}
	for some $g_f(x^*;\xi) \in \partial f(x^*;\xi)$. Thus without losing generality we are able to consider that: $\langle g_f(x^*), z - x^*\rangle \ge 0 $ for all $z \in X$. Then we derive that:
	\begin{align}
	& F_{\mu}(x)  - F(x^*) =  f_{\mu}(x) - f(x^*) + D_{\mu}(x) \nonumber\\
	&\ge \mathbb{E}\left[\langle g_f(x^*;\xi), z_{\mu}(x;\xi) - x^* \rangle + \frac{1}{2\mu}  \norm{z_{\mu}(x;\xi) - x}^2 \right] + D_{\mu}(x) \nonumber \\
	& \ge \mathbb{E}\left[\langle g_f(x^*;\xi), z_{\mu}(x;\xi) - x \rangle + \frac{1}{2\mu}  \norm{z_{\mu}(x;\xi) - x}^2 \right] + \mathbb{E} \left[ \langle g_f(x^*;\xi), x - x^* \rangle \right] + D_{\mu}(x)\nonumber \\
	& \overset{\eqref{stoch_repr}}{\ge} - \mathbb{E}\left[\frac{\mu}{2}  \norm{g_f(x^*;\xi)}^2 \right] +  \langle g_f(x^*), x - x^* \rangle + D_{\mu}(x)\nonumber  \\
	& \ge - \mathbb{E}\left[\frac{\mu}{2}  \norm{g_f(x^*;\xi)}^2 \right] +  \langle g_f(x^*), \pi_{X}(x) - x^* \rangle + \langle g_f(x^*), x - \pi_{X}(x) \rangle + D_{\mu}(x)\nonumber  \\
	& \ge - \mathbb{E}\left[\frac{\mu}{2}  \norm{g_f(x^*;\xi)}^2 \right] +   \langle g_f(x^*), x - \pi_{X}(x) \rangle + D_{\mu}(x)  \nonumber  \\
	& \ge - \mathbb{E}\left[\frac{\mu}{2}  \norm{g_f(x^*;\xi)}^2 \right] -  \norm{g_f(x^*)} \text{dist}_{X}(x) + \frac{\sigma_X}{2\mu}\text{dist}_{X}^2(x) \nonumber  \\
	& \ge - \mathbb{E}\left[\frac{\mu}{2}  \norm{g_f(x^*;\xi)}^2 \right] -  \frac{\mu}{2\sigma_X}\norm{g_f(x^*)}^2. \label{aprox_optimmodif1}
	\end{align}
\end{proof}

\begin{proof}[Proof of Theorem \ref{lemma_qg_proxqg}]
We make two central observations. Similarly, as in the proof of Lemma \ref{lemma_auxres}, assume w.l.g. that: $\langle g_f(x^*), z - x^*\rangle \ge 0 $ for all $z \in X$. First, using the linear regularity of the feasible set, it can be easily seen that:
\begin{align}
& F_{\mu}(x) = f_{\mu}(x) + \mathbb{E} \left[  \frac{1}{2\mu}\text{dist}_{X_{\xi}}^2(x) \right] \nonumber\\
&\ge \mathbb{E}\left[ f(x^*;\xi) + \langle g_f(x^*;\xi), z_{\mu}(x;\xi) - x^* \rangle + \frac{1}{2\mu}\norm{z_{\mu}(x;\xi) - x}^2 \right] 
	+ \frac{\sigma_X}{2\mu} \text{dist}_X^2(x)\nonumber \\
	& =  f^* + \mathbb{E}\left[ \langle g_f(x^*;\xi), z_{\mu}(x;\xi) - x \rangle + \frac{1}{2\mu}\norm{z_{\mu}(x;\xi) - x}^2 \right] \nonumber\\
	& \hspace{2cm} + \langle g_f(x^*), \pi_X(x) - x^* \rangle + \langle g_f(x^*), x - \pi_X(x) \rangle + \frac{\sigma_X}{2\mu} \text{dist}_X^2(x)\nonumber \\
	& \ge  f^* - \mu \mathbb{E}\left[ \norm{g_f(x^*;\xi)}^2 \right] + \frac{1}{4\mu}\mathbb{E}\left[\norm{z_{\mu}(x;\xi) - x}^2 \right] \nonumber\\
	& \hspace{3cm} - \norm{g_f(x^*)} \text{dist}_X(x) + \frac{\sigma_X}{2\mu} \text{dist}_X^2(x)\nonumber \\
	& \ge  f^* - \mu \mathbb{E}\left[ \norm{g_f(x^*;\xi)}^2 \right] + \frac{\mu}{4}\mathbb{E}\left[\norm{\nabla f_{\mu}(x;\xi)}^2 \right] - \frac{\mu}{\sigma_X}\norm{g_f(x^*)}^2 + \frac{\sigma_X}{4\mu} \text{dist}_X^2(x),  
	\label{lowbound_moreau}
	\end{align}
	for all $x \in \rset^n$.
	In the first inequality we used convexity of $f(\cdot;\xi)$
	and in the third the inequality $ab \le \frac{a}{2\alpha} + \frac{\alpha b}{2}$, for $a = \norm{g_f(x^*)},b = \text{dist}_X(x), \alpha = \frac{\sigma_X}{2\mu}$. Now we derive two auxiliary inequalities, useful for the final constant bounds. Since $f$ is differentiable then $g_f(x) = \mathbb{E}[g_f(x;\xi)]$.
	First, using the smoothing gradient inequality from Lemma \ref{lemma_gradorder} we obtain:
	\begin{align}
	\frac{\mu}{4}\mathbb{E}[ & \norm{\nabla f_{\mu}(x;\xi)}^2] 
	\ge \frac{\mu}{8}\mathbb{E}\left[\norm{\nabla f_{\mu}(\pi_X(x);\xi)}^2\right] - \frac{\mu}{4} \mathbb{E}\left[\norm{\nabla f_{\mu}(x;\xi)-\nabla f_{\mu}(\pi_X(x);\xi)}^2 \right] \nonumber\\
	&\ge \frac{\mu}{8}\mathbb{E}\left[\norm{\nabla f_{\mu}(\pi_X(x);\xi)}^2\right] - \frac{1}{4\mu} \text{dist}_X^2(x) \nonumber\\
	&\overset{\text{Lemma} \; \ref{lemma_gradorder}}{\ge} \frac{\mu}{8}\mathbb{E}\left[\frac{\norm{g_f(\pi_X(x);\xi)}^2}{(1 + \mu L_{\xi})^2}\right] - \frac{1}{4\mu} \text{dist}_X^2(x) \nonumber\\
	&\overset{\text{Jensen} \; + \eqref{grad_qg}}{\ge} \frac{\sigma_f \mu}{16(1 + \mu L_{\max})^2} \text{dist}_{X^*}^2(\pi_X(x)) - \frac{1}{4\mu} \text{dist}_X^2(x) \nonumber\\
	& \ge \frac{\sigma_f\mu}{32(1 + \mu L_{\max})^2} \text{dist}_{X^*}^2(x) - \left(\frac{1}{4\mu} + \frac{\sigma_f \mu}{16(1 + \mu L_{\max})^2} \right) \text{dist}_X^2(x). \label{lastgradbound}
	\end{align}
	Second, based on similar lines as in the proof of Lemma \ref{lemma_auxres}$(ii)$, notice that:
	\begin{align}
	f_{\mu}(x) 
	& \ge f^* + \mathbb{E}\left[ \langle g_f (x^*;\xi), z_{\mu}(x;\xi) - x^* \rangle  + \frac{1}{2\mu}\norm{z_{\mu}(x;\xi) - x}^2 \right] \nonumber \\
	& \ge f^* - \frac{\mu}{2}\mathbb{E}\left[ \norm{g_f(x^*;\xi)}^2 \right] + \langle g_f(x^*), x - \pi_X(x) \rangle + \langle g_f(x^*), \pi_X(x) -x^* \rangle \nonumber\\
	& \overset{C.S.}{\ge} f^* - \frac{\mu}{2}\mathbb{E}\left[ \norm{g_f(x^*;\xi)}^2 \right] - \norm{g_f(x^*)}\text{dist}_X(x), \label{resid_lowbnd}
	\end{align}
	where in the last inequality we used Cauchy-Schwartz inequality and the first order optimality conditions.
	By combining \eqref{lowbound_moreau}-\eqref{lastgradbound}-\eqref{resid_lowbnd}, then we have:
	\begin{align}
	& F_{\mu}(x) - F^*
	\ge  \frac{\sigma_f\mu}{32(1 + \mu L_{\max})^2} \text{dist}_{X^*}^2(x) - \mu \mathbb{E}\left[ \norm{g_f(x^*;\xi)}^2 \right]  - \frac{\mu}{\sigma_X}\norm{g_f(x^*)}^2 \nonumber\\
	& \hspace{2cm}  + \frac{\sigma_X}{4\mu}\text{dist}_X^2(x)  - \left[ \frac{1}{4\mu} + \frac{\sigma_f \mu}{16(1 + \mu L_{\max})^2} \right] \text{dist}_X^2(x) \nonumber \\
	& \overset{l.r.}{\ge}  \frac{\sigma_f\mu}{32(1 + \mu L_{\max})^2} \text{dist}_{X^*}^2(x) - \mu \mathbb{E}\left[ \norm{g_f(x^*;\xi)}^2 \right] - \frac{\mu}{\sigma_X}\norm{g_f(x^*)}^2 \nonumber\\
	& \hspace{1cm} + \frac{\sigma_X}{4\mu}\text{dist}_X^2(x) - \left[ \frac{1}{2\sigma_X} + \frac{\sigma_f\mu^2}{8\sigma_X(1 + \mu L_{\max})^2} \right] (F_{\mu}(x) - F^* - (f_{\mu}(x) - F^*)) \nonumber \\
	& \overset{\eqref{resid_lowbnd}}{\ge}  \frac{\sigma_f\mu}{32(1 + \mu L_{\max})^2} \text{dist}_{X^*}^2(x) - \mu \mathbb{E}\left[ \norm{g_f(x^*;\xi)}^2 \right] - \frac{\mu}{\sigma_X}\norm{g_f(x^*)}^2 \nonumber\\
	& - \left[ \frac{1}{2\sigma_X} + \frac{\sigma_f\mu^2}{8\sigma_X(1 + \mu L_{\max})^2} \right] \bigg (  F_{\mu}(x) - F^* \nonumber \\ 
	& \hspace{3cm} - \frac{\mu}{2}\mathbb{E}\left[ \norm{g_f(x^*;\xi)}^2 \right] - \norm{g_f(x^*)}\text{dist}_X(x) \bigg ) + \frac{\sigma_X}{4\mu}\text{dist}_X^2(x), \nonumber 
	\end{align}
	where in the second inequality we used linear regularity.
	By transferring all the terms containing $F_{\mu}$ in the left hand side and denoting $c = \frac{1}{2\sigma_X} + \frac{\sigma_f\mu^2}{8\sigma_X(1 + \mu L_{\max})^2}$, then we 
	finally obtain:
	\begin{align}
	(1 + c)(F_{\mu}(x) - F^*)
	&\ge  \frac{\sigma_f\mu}{32(1 + \mu L_{\max})^2} \text{dist}_{X^*}^2(x) \nonumber \\
	& \hspace{1cm} - (1+c)\mu \mathbb{E}\left[ \norm{g_f(x^*;\xi)}^2 \right] - (1+c^2)\frac{\mu}{\sigma_X}\norm{g_f(x^*)}^2,
	\end{align}
	which immediately confirms our above result.
\end{proof}

\begin{lemma}\label{lemma_lipgrad}
	Let $h:\rset^n \to \rset^{}$ be convex and having Lipschitz continuous gradient with constant $L_h$, then the following relation hold:
	\begin{align*}
	\norm{\nabla h(x)} \le 2L_h(h(x) - h(x^*)) \qquad \forall x \in \rset^n, 
	\end{align*}
	where $x^* \in \arg\min_x h(x)$.
\end{lemma}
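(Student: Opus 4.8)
The plan is to derive this inequality directly from the standard descent lemma for functions with Lipschitz continuous gradient, together with the global optimality of $x^*$. First I would record the quadratic upper bound implied by $L_h$-Lipschitz continuity of $\nabla h$: for all $x,y \in \rset^n$,
\begin{align*}
h(y) \le h(x) + \langle \nabla h(x), y - x \rangle + \frac{L_h}{2}\norm{y-x}^2.
\end{align*}
This is precisely the tool that converts the squared gradient norm into a guaranteed decrease of the function value, and notably it does not require convexity at all.

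Next I would evaluate this bound at the point produced by a single gradient step, $y = x - \frac{1}{L_h}\nabla h(x)$, which is the exact minimizer of the right-hand quadratic in $y$. Substituting yields
\begin{align*}
h\!\left(x - \tfrac{1}{L_h}\nabla h(x)\right) \le h(x) - \frac{1}{2L_h}\norm{\nabla h(x)}^2,
\end{align*}
where the only arithmetic is collecting $\langle \nabla h(x), -\frac{1}{L_h}\nabla h(x)\rangle + \frac{L_h}{2}\frac{1}{L_h^2}\norm{\nabla h(x)}^2 = -\frac{1}{2L_h}\norm{\nabla h(x)}^2$, which is routine.

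Finally I would invoke optimality: since $x^*$ is a global minimizer, $h(x^*) \le h(x - \frac{1}{L_h}\nabla h(x))$, and chaining this with the previous display gives $h(x^*) \le h(x) - \frac{1}{2L_h}\norm{\nabla h(x)}^2$. Rearranging produces $\norm{\nabla h(x)}^2 \le 2L_h\bigl(h(x) - h(x^*)\bigr)$, matching the right-hand side of the stated relation (the left-hand side should carry a square, i.e. $\norm{\nabla h(x)}^2$). I expect no genuine obstacle here: the entire content is the single substitution of the gradient-step point into the descent lemma, and since convexity of $h$ is never used, the estimate holds for any $L_h$-smooth function with a global minimizer.
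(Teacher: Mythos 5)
Your proof is correct and is essentially the paper's own argument: the paper states the same descent lemma and then ``minimizes both sides over $y$,'' which is precisely your explicit substitution of the gradient step $y = x - \tfrac{1}{L_h}\nabla h(x)$ combined with the global optimality $h(x^*) \le h\left(x - \tfrac{1}{L_h}\nabla h(x)\right)$. You are also right on both side remarks: convexity is never used, and the lemma's left-hand side should carry a square, i.e. $\norm{\nabla h(x)}^2 \le 2L_h\left(h(x)-h(x^*)\right)$, as the paper's own proof and its subsequent use in the proof of Theorem \ref{th_rsc_proxqg} confirm.
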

\begin{proof}
	From Lipschitz continuity, we have:
	\begin{align*}
	h(y) \le h(x) + \langle \nabla h(x), y-x\rangle + \frac{L_h}{2}\norm{x-y}^2.
	\end{align*}
	By minimizing both sides over $y$, we obtain:
	\begin{align*}
	h(x^*) \le h(x) - \frac{1}{2L_h}\norm{\nabla h(x)}^2,
	\end{align*}
	which confirms the result.
\end{proof}

\begin{lemma}\label{lemma_sc_link}
	Let $f$ be continuously differentiable, then $f$ is $M -$restricted strongly convex if and only if:
	\begin{align}\label{gradvarbound}
	\langle \nabla f(x) - \nabla f(y), x - y \rangle \ge \langle x - y, M (x-y)\rangle \qquad \forall x,y \in \rset^n.
	\end{align}
\end{lemma}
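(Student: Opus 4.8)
The plan is to establish the two implications separately, reproducing the classical equivalence between strong convexity and gradient monotonicity but carrying the matrix $M$ through the computations. Recall that, specialized to the single continuously differentiable function $f$ (so that the subgradient is the gradient, $g_f(y) = \nabla f(y)$), the statement \emph{$f$ is $M$-restricted strongly convex} means
$$
f(x) \ge f(y) + \langle \nabla f(y), x-y \rangle + \frac{1}{2}\langle x-y, M(x-y) \rangle \qquad \forall x,y \in \rset^n.
$$
Both directions reduce to short elementary arguments, and neither requires any property of $M$ beyond what is implicit in the definition.

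For the forward implication, I would write the $M$-RSC inequality twice: once as stated, and once with the roles of $x$ and $y$ interchanged. Adding the two inequalities cancels the function values $f(x)+f(y)$ on both sides. The two quadratic terms combine into a single $\langle x-y, M(x-y)\rangle$, since $\langle y-x, M(y-x)\rangle = \langle x-y, M(x-y)\rangle$ (the two sign flips cancel, so no symmetry assumption on $M$ is even needed). Rearranging the remaining linear terms and using $\langle \nabla f(x), y-x\rangle = -\langle \nabla f(x), x-y\rangle$ then yields \eqref{gradvarbound} directly.

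For the reverse implication I would exploit continuous differentiability to integrate the gradient along the segment from $y$ to $x$. Setting $z_t = y + t(x-y)$, the fundamental theorem of calculus gives
$$
f(x) - f(y) - \langle \nabla f(y), x-y \rangle = \int_0^1 \langle \nabla f(z_t) - \nabla f(y), x-y \rangle \, dt.
$$
The key step is to apply the hypothesis \eqref{gradvarbound} to the pair $(z_t, y)$: since $z_t - y = t(x-y)$, this produces $\langle \nabla f(z_t) - \nabla f(y), t(x-y)\rangle \ge t^2 \langle x-y, M(x-y)\rangle$, and dividing by $t>0$ gives $\langle \nabla f(z_t) - \nabla f(y), x-y\rangle \ge t\langle x-y, M(x-y)\rangle$. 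Integrating this lower bound over $[0,1]$ contributes the factor $\int_0^1 t\,dt = \frac{1}{2}$, recovering exactly the $M$-RSC inequality. The only mildly delicate point is the rescaling along the segment — clearing the $t^2$ that arises on the right-hand side and checking that the integral representation is valid under mere $C^1$ regularity (which guarantees integrability of $t \mapsto \langle \nabla f(z_t), x-y\rangle$). Both are routine, so the equivalence follows from these two computations.
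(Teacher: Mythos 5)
Your proof is correct and follows essentially the same route as the paper: the forward direction by adding the $M$-RSC inequality to its copy with $x$ and $y$ interchanged, and the reverse direction by integrating $\langle \nabla f(y+t(x-y)) - \nabla f(y), x-y\rangle$ over $t \in [0,1]$ after rescaling the monotonicity bound applied to the pair $(y+t(x-y), y)$. Your write-up is in fact slightly cleaner, since the paper's displayed integrand $\frac{\tau}{2}\langle x-y, M(x-y)\rangle$ contains a harmless typo (it should be $\tau\langle x-y, M(x-y)\rangle$ to integrate to the stated $\frac{1}{2}\langle x-y, M(x-y)\rangle$), which your computation gets right.
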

\begin{proof}
	Assume that $f$ is $M-$restricted strongly convex, then by adding the relation 
	\begin{align*}
	f(x) \ge f(y) + \langle \nabla f(y),x - y\rangle + \frac{1}{2}\langle x - y, M (x-y) \end{align*}
	with the same but with interchanged $x$ and $y$ then we obtain the first implication. Next, assume that \eqref{gradvarbound} holds.
	By the Mean Value Theorem we have:
	\begin{align*}
	f(x) 
	& = f(y) + \int\limits_{0}^1 \langle\nabla f(\tau x + (1-\tau)y) , x- y\rangle d\tau \\
	& = f(y) + \langle \nabla f(y) , x - y\rangle +  \int\limits_{0}^1 \frac{1}{\tau} \langle\nabla f(\tau x + (1-\tau)y) - \nabla f(y) , \tau( x- y )\rangle d\tau \\
	& \overset{\eqref{gradvarbound}}{\ge} f(y) + \langle \nabla f(y) , x - y\rangle +  \int\limits_{0}^1 \frac{\tau}{2} \langle x- y, M ( x- y )\rangle d\tau \\
	& = f(y) + \langle \nabla f(y) , x - y\rangle +  \frac{1}{2}\langle x- y, M ( x- y )\rangle d\tau,
	\end{align*}
	which confirms the second implication.
\end{proof}

\begin{proof}[proof of Lemma \ref{lemma_restrictedscqg}]
	From the $M_{\xi}-$restricted strong convexity assumption we have:
	\begin{align*}
	\langle \nabla f(x;\xi) - \nabla f(y;\xi), x - y\rangle \ge \norm{x - y}^2_{M_{\xi}}.
	\end{align*}
	By taking $x = z_{\mu}(x;\xi)$ and $y = z_{\mu}(y;\xi)$ then the above relation implies:
	\begin{align}
	&\norm{z_{\mu}(x;\xi) - z_{\mu}(y;\xi)}^2_{M_{\xi}} 
	 \le \langle \nabla f(z_{\mu}(x;\xi);\xi) - \nabla f(z_{\mu}(y;\xi);\xi), z_{\mu}(x;\xi) - z_{\mu}(y;\xi)\rangle \nonumber\\
	& \le \frac{1}{\mu}\langle x - z_{\mu}(x;\xi) - (y - z_{\mu}(y;\xi)), z_{\mu}(x;\xi) - z_{\mu}(y;\xi)\rangle \nonumber\\
	& \le \frac{1}{\mu} \langle x  - y , z_{\mu}(x;\xi) - z_{\mu}(y;\xi)\rangle - \frac{1}{\mu}\norm{z_{\mu}(x;\xi) - z_{\mu}(y;\xi)}^2.
	\label{prepseudocontraction}
	\end{align}
	After simple manipulations, using the Cauchy-Schwartz inequality the last inequality \eqref{prepseudocontraction} further implies:
	\begin{align}\label{precontraction}
	& \langle  z_{\mu}(x;\xi) - z_{\mu}(y;\xi),  (I_n + \mu M_{\xi})(z_{\mu}(x;\xi) - z_{\mu}(y;\xi))\rangle 
	\le \langle x  - y , z_{\mu}(x;\xi) - z_{\mu}(y;\xi)\rangle \nonumber \\
	&  =  \langle (I_n + \mu M_{\xi})^{-1/2}(x  - y) ,  (I_n + \mu M_{\xi})^{1/2}(z_{\mu}(x;\xi) - z_{\mu}(y;\xi))\rangle \nonumber \\
	&\overset{C.-S.}{\le} \norm{(I_n + \mu M_{\xi})^{-1/2}(x  - y)} \norm{(I_n + \mu M_{\xi})^{1/2}( z_{\mu}(x;\xi) - z_{\mu}(y;\xi))}.
	\end{align}
	An important consequence of \eqref{precontraction} is the following contraction property:
	\begin{align}\label{contraction}
	\norm{(I_n + \mu M_{\xi})^{1/2}( z_{\mu}(x;\xi) - z_{\mu}(y;\xi))} \le \norm{(I_n + \mu M_{\xi})^{-1/2}(x  - y)},
	\end{align}
	for all $x,y \in \rset^n, \xi \in \Omega$. Now by using the particular structure of $\nabla f_{\mu}(\cdot;\xi)$ and that fact that $I_n + \mu M_{\xi}$ is invertible, we have:
	\begin{align*}
	& \langle \nabla f_{\mu}(x;\xi) - \nabla f_{\mu}(y;\xi), x - y \rangle
	=  \frac{1}{\mu}\norm{x-y}^2 - \frac{1}{\mu}\langle z_{\mu}(x;\xi) - z_{\mu}(y;\xi), x-y\rangle \\
	& =  \frac{1}{\mu}\norm{x-y}^2 - \frac{1}{\mu} \langle (I_n + \mu M_{\xi})^{1/2} \left(z_{\mu}(x;\xi) - z_{\mu}(y;\xi)\right), (I_n + \mu M_{\xi})^{-1/2}(x-y)\rangle.	
	\end{align*}
	By taking expectation in both sides and also using the Cauchy-Schwartz inequality and the contraction property \eqref{contraction} we get:
	\begin{align}
	& \langle \nabla F_{\mu}(x) -  \nabla F_{\mu}(y),  x - y \rangle \nonumber \\
	& =  \frac{1}{\mu}\norm{x-y}^2 - \frac{1}{\mu} \mathbb{E}\left[ \langle (I_n + \mu M_{\xi})^{1/2} \left(z_{\mu,\xi}(x) - z_{\mu,\xi}(y)\right), (I_n + \mu M_{\xi})^{-1/2}(x-y)\rangle \right] \nonumber \\	
	& \overset{C.S.}{\ge}  \frac{1}{\mu}\norm{x-y}^2 - \frac{1}{\mu} \mathbb{E}\left[ \norm{ (I_n + \mu M_{\xi})^{1/2} \left(z_{\mu}(x;\xi) - z_{\mu}(y;\xi)\right)} \norm{(I_n + \mu M_{\xi})^{-1/2}(x-y)} \right] \nonumber \\
	&  \overset{\eqref{contraction}}{\ge}  \frac{1}{\mu}\norm{x-y}^2 - \frac{1}{\mu} \mathbb{E}\left[\norm{(I_n + \mu M_{\xi})^{-1/2}(x-y)}^2 \right] \nonumber \\	 
	&  =  \frac{1}{\mu}\norm{x-y}^2 - \frac{1}{\mu}  \langle x - y, \mathbb{E}\left[(I_n + \mu M_{\xi})^{-1}\right] (x-y)\rangle \nonumber \\
	&  =  \frac{1}{\mu}  \langle x - y, I - \mathbb{E}\left[(I_n + \mu M_{\xi})^{-1}\right] (x-y) \rangle.	\label{sc_moreau} 
	\end{align}
	We further deduce that:
	\begin{align}
	I_n & - ( I_n + \mu M_{\xi})^{-1} 
	 =  \left[I_n - ( I_n + \mu M_{\xi})^{-1} \right]^{1/2} \left[ I_n - ( I_n + \mu M_{\xi})^{-1}\right]^{1/2} \nonumber \\
	& =  \left[I_n + \mu M_{\xi} - I_n \right]^{1/2}( I_n + \mu M_{\xi})^{-1/2} ( I_n + \mu M_{\xi})^{-1/2} \left[  I_n + \mu M_{\xi} - I_n \right]^{1/2} \nonumber \\
	& = \mu M_{\xi}^{1/2} (I_n + \mu M_{\xi})^{-1} M_{\xi}^{1/2}.\label{sc_morconst_bound}
	\end{align}
	By using this bound into \eqref{sc_moreau}, then we finally obtain the strong convexity relation:
	\begin{align}
	\langle \nabla F_{\mu}(x) - \nabla F_{\mu}(y),  x - y \rangle \ge 
	&\frac{1}{\mu}  \langle x - y, I_n - \mathbb{E}\left[(I_n + \mu M_{\xi})^{-1}\right] (x-y) \rangle \nonumber \\
	& \overset{\eqref{sc_morconst_bound}}{\ge} \langle x - y, \mathbb{E}\left[ M_{\xi}^{1/2} (I_n + \mu M_{\xi})^{-1} M_{\xi}^{1/2}\right] (x-y) \rangle \nonumber \\
	& \ge  \langle x - y, \mathbb{E}\left[ M_{\xi}\lambda_{\min}\left((I_n + \mu M_{\xi})^{-1}\right) \right] (x-y) \rangle. \label{sc_morfinal}
	\end{align}
	As the last step of the proof, by observing $\lambda_{\min}\left((I_n + \mu M_{\xi})^{-1}\right) = \frac{1}{\lambda_{\max}(I_n + \mu M_{\xi})}$ and by applying Lemma \ref{lemma_sc_link} with $f = F_{\mu}$ and $M = \mathbb{E}\left[ M_{\xi}\lambda_{\min}\left((I_n + \mu M_{\xi})^{-1}\right) \right]$, makes the connection between \eqref{sc_morfinal} and the above result. 
\end{proof}

\begin{proof}[Proof of Theorem \ref{th_rsc_proxqg}]
	Recall that $F_{\mu}(x) = f_{\mu}(x) + D_{\mu}(x),$ where $D_{\mu}(x) : =  \mathbb{E}[\text{dist}^2_{X_{\xi}}(x)]$. Let $x^*_{\mu} \in \arg\min_x F_{\mu}(x), \tilde{x}^*_{\mu} = \arg\min\limits_{x \in X} f_{\mu}(x)$ and denote $\hat{M}_f = \mathbb{E}\left[ \frac{M_{\xi}}{\lambda_{\max}(I_n + \mu M_{\xi})}\right]$. Then, by using Lemma \ref{lemma_restrictedscqg} for $F_{\mu}$, we obtain:
	\begin{align}
	& F_{\mu}(x)  = f_{\mu}(x) + D_{\mu}(x) \nonumber\\
	& \ge f_{\mu}(\tilde{x}^*_{\mu}) + \langle \nabla f_{\mu}(\tilde{x}^*_{\mu}), x - \tilde{x}^*_{\mu} \rangle + \frac{1}{2} \langle x - \tilde{x}^*_{\mu},\hat{M}_f(x - \tilde{x}^*_{\mu})\rangle + D_{\mu}(x) \nonumber \\
	& = F_{\mu}(\tilde{x}^*_{\mu}) + \langle \nabla f_{\mu}(\tilde{x}^*_{\mu}), \pi_X{(x)} - \tilde{x}^*_{\mu} \rangle + \langle \nabla f_{\mu}(\tilde{x}^*_{\mu}), x - \pi_X(x) \rangle + \nonumber \\
	&  \hspace{15pt} \frac{1}{2} \langle x - \tilde{x}^*_{\mu},\hat{M}_f(x - \tilde{x}^*_{\mu})\rangle + D_{\mu}(x) \nonumber \\
	& \overset{C.S.}{\ge} F_{\mu}(\tilde{x}^*_{\mu})  - \norm{\nabla f_{\mu}(\tilde{x}^*_{\mu})} \text{dist}_X(x) +  \frac{1}{2} \langle x - \tilde{x}^*_{\mu},\hat{M}_f(x - \tilde{x}^*_{\mu})\rangle + D_{\mu}(x) \nonumber \\ 
	& \overset{l.r.}{\ge} F_{\mu}(\tilde{x}^*_{\mu})  - \norm{\nabla f_{\mu}(\tilde{x}^*_{\mu})} \text{dist}_X(x) +  \frac{1}{2} \langle x - \tilde{x}^*_{\mu},\hat{M}_f(x - \tilde{x}^*_{\mu})\rangle + \frac{1}{2}D_{\mu}(x) + \frac{\sigma_X}{4\mu}\text{dist}_X^2(x) \nonumber \\ 
	& \ge F_{\mu}(\tilde{x}^*_{\mu})  - \frac{\mu}{\sigma_X} \norm{\nabla f_{\mu}(\tilde{x}^*_{\mu})}^2  +  \frac{1}{2} \langle x - \tilde{x}^*_{\mu},\hat{M}_f(x - \tilde{x}^*_{\mu})\rangle  + \frac{1}{2}D_{\mu}(x), \label{pqg_prelim}
	\end{align}
	where in the second inequality we used Cauchy-Schwartz inequality and in the third we used linear regularity of feasible sets $\{X_{\xi}\}_{\xi \in \Omega}$.
	To bound further the right hand side, we first have from Lemma \ref{lemma_lipgrad}:
	\begin{align}\label{smoothgrad_bound}
	& \norm{\nabla f_{\mu}(\tilde{x}^*_{\mu})}^2
	\le \frac{2}{\mu} \left(f_{\mu}(\tilde{x}^*_{\mu}) - f_{\mu}^* \right) 
	\le \frac{2}{\mu} \left(f_{\mu}(x^*) - f_{\mu}^* \right) \nonumber\\
	&\overset{\text{Lemma} \; \ref{lemma_auxres} \; (i)}{\le} \frac{2}{\mu} \left(f(x^*) - f_{\mu}^* \right) \overset{\eqref{resid_lowbnd}}{\le} \mathbb{E}\left[\norm{g_f(x^*;\xi)}^2 \right] + \frac{2}{\mu} \norm{g_f(x^*)}\underbrace{\text{dist}_X(\tilde{x}^*_{\mu})}_{=0},
	\end{align}
	where in the last inequality we applied Lemma \ref{lemma_auxres}$(ii)$ with $F=f$. Second, by applying  \eqref{pqg_prelim} with $x = x^*$ and by using \eqref{smoothgrad_bound}, we obtain:
	\begin{align}
	\frac{1}{2} \langle x^* - \tilde{x}^*_{\mu}, & \hat{M}_f(x^* - \tilde{x}^*_{\mu})\rangle 
	\le F_{\mu}(x^*) - F_{\mu}(\tilde{x}^*_{\mu}) + \frac{\mu}{\sigma_X}\mathbb{E}\left[\norm{g_f(x^*;\xi)}^2 \right] \nonumber\\
	& \le F(x^*) - F_{\mu}(\tilde{x}^*_{\mu}) + \frac{\mu}{\sigma_X}\mathbb{E}\left[\norm{g_f(x^*;\xi)}^2 \right] \nonumber\\
	& \overset{\text{Lemma} \; \ref{lemma_auxres} \; (iii)}\le \frac{\mu}{2} \left(1 + \frac{2}{\sigma_X} \right)\mathbb{E}\left[\norm{g_f(x^*;\xi)}^2 \right]  + \frac{\mu}{2\sigma_X}\norm{g_f(x^*)}^2  \label{distoptims_bound}
	\end{align}
	Combining the upper bounds \eqref{smoothgrad_bound}-\eqref{distoptims_bound} into relation \eqref{pqg_prelim}, we derive:
	\begin{align}
	& f_{\mu}(x) 
	\ge f_{\mu}(\tilde{x}^*_{\mu})  - \frac{\mu}{\sigma_X} \norm{\nabla f_{\mu}(\tilde{x}^*_{\mu})}^2  -  \frac{1}{2} \langle x^* - \tilde{x}^*_{\mu},\hat{M}_f(x^* - \tilde{x}^*_{\mu})\rangle + \nonumber\\
	& \hspace{2cm} \frac{1}{4} \langle x - x^*,\hat{M}_f(x - x^*)\rangle + \frac{1}{2}D_{\mu}(x) \nonumber \\
	& \overset{\eqref{smoothgrad_bound}-\eqref{distoptims_bound}}{\ge} F_{\mu}(\tilde{x}^*_{\mu})  -  \frac{\mu}{2} \left(1 + \frac{2}{\sigma_X} \right)\mathbb{E}\left[\norm{g_f(x^*;\xi)}^2 \right]  - \frac{\mu}{2\sigma_X}\norm{g_f(x^*)}^2 \nonumber \\      
	& \hspace{2cm} + \frac{1}{4} \langle x - x^*,\hat{M}_f(x - x^*)\rangle + \frac{1}{2}D_{\mu}(x) \nonumber \\
	& \overset{l.r.}{\ge} F_{\mu}(\tilde{x}^*_{\mu})  -  \frac{\mu}{2} \left(1 + \frac{2}{\sigma_X} \right)\mathbb{E}\left[\norm{g_f(x^*;\xi)}^2 \right]  - \frac{\mu}{2\sigma_X} \norm{g_f(x^*)}^2 \nonumber \\      
	& \hspace{2cm} + \frac{1}{4} \langle x - x^*,\hat{M}_f(x - x^*)\rangle + \frac{1}{2}D_{\mu}(x). \nonumber
	\end{align}
	Lastly, by taking into account that 
	$X^* = \{x^*: M_f^{1/2}x^* = y^*, x^* \in X_{\xi}, \xi \in \Omega \}$, then:
	\begin{align}
	&\norm{\hat{M}_f^{1/2}(x - x^*)}^2 + D_{\mu}(x) \nonumber\\
	&= \frac{1}{1 + \mu \lambda_{\max}(M_{\max})} \bigg( \langle x-x^*, \left[1 + \mu \lambda_{\max}(M_{\max})\right]\hat{M}_f(x - x^*) \rangle \nonumber\\
	& \hspace{5cm} + \frac{1 + \mu \lambda_{\max}(M_{\max})}{2\mu} \mathbb{E}[\text{dist}_{X_{\xi}}^2(x)] \bigg) \nonumber\\
	& \ge \frac{1}{1 + \mu \lambda_{\max}(M_{\max})} \bigg( \langle x-x^*, \mathbb{E}\left[\frac{1 + \mu \lambda_{\max}(M_{\max}) M_{\xi}}{\lambda_{\max}(I_n + \mu M_{\xi})} \right] (x - x^*) \rangle \nonumber\\
	& \hspace{5cm}  + \frac{\lambda_{\max}(M_{\max})}{2} \mathbb{E}[\text{dist}_{X_{\xi}}^2(x)]\bigg) \nonumber\\
	& \ge \frac{\min\{1, \lambda_{\max}(M_{\max})\}}{2(1 + \mu \lambda_{\max}(M_{\max}))} \left( \langle x-x^*, M_f (x - x^*) \rangle + \mathbb{E}[\text{dist}_{X_{\xi}}^2(x)]\right) \nonumber \\
	& \ge \frac{\min\{1, \lambda_{\max}(M_{\max})\}}{2(1 + \mu \lambda_{\max}(M_{\max}))} \left( \norm{M_f^{1/2}x - y^*}^2 + \mathbb{E}[\text{dist}_{X_{\xi}}^2(x)]\right) \nonumber \\
	& \overset{l.r.}{\ge} \frac{\min\{1, \lambda_{\max}(M_{\max})\}}{2(1 + \mu \lambda_{\max}(M_{\max}))} \sigma_{f,X} \text{dist}_{X^*}^2(x), \quad \forall x \in \rset^n, \label{hoffbnd_convex}
	\end{align}
	where in the last inequality we have used that $X^* = \hat{X} \cap X$ and the linear regularity. This last argument leads to the final lower bound:
	\begin{align*}
	& F_{\mu}(x) 
	\ge  F_{\mu}(\tilde{x}^*_{\mu})  -  \frac{\mu}{2} \left(1 + \frac{2}{\sigma_X} \right)\mathbb{E}\left[\norm{g_f(x^*;\xi)}^2 \right]  - \frac{\mu}{2\sigma_X} \norm{g_f(x^*)}^2     \nonumber\\ 
	& \hspace{5cm} +  \frac{1}{4} \langle x - x^*,\hat{M}_f(x - x^*)\rangle + \frac{1}{2}D_{\mu}(x) \nonumber \\
	& \overset{\eqref{hoffbnd_convex}}{\ge}
	F_{\mu}(\tilde{x}^*_{\mu})  -  \frac{\mu}{2} \left(1 + \frac{2}{\sigma_X} \right)\mathbb{E}\left[\norm{g_f(x^*;\xi)}^2 \right]  - \frac{\mu}{\sigma_X}\left(\frac{1}{2} + \frac{2}{\sigma_X^2}\right) \norm{g_f(x^*)}^2   \nonumber \\ 
	& \hspace{5cm} +  \frac{\min\{1, \lambda_{\max}(M_{\max})\}}{4(1 + \mu \lambda_{\max}(M_{\max}))} \sigma_{f,X} \text{dist}_{X^*}^2(x), 
	\end{align*}
	which confirms the constants from part $(i)$.
	For $(ii)$ and $(iii)$ we commonly derive
	\begin{align}
	F_{\mu}(x) & = f_{\mu}(x) + D_{\mu}(x) \nonumber\\
	& \ge F_{\mu}(x^*_{\mu}) + \langle \nabla F_{\mu}(x^*_{\mu}), x - x^*_{\mu} \rangle + \frac{1}{2} \langle x - x^*_{\mu},\hat{M}_f(x - x^*_{\mu})\rangle \nonumber\\
	& \ge F_{\mu}(x^*_{\mu}) - \frac{1}{2} \langle x^* - x^*_{\mu},\hat{M}_f(x^* - x^*_{\mu})\rangle + \frac{1}{4} \langle x - x^*,\hat{M}_f(x - x^*)\rangle \label{pqg_prelim_sc}
	\end{align}
	On the other hand, by taking $x = x^* $ in \eqref{pqg_prelim_sc},  we get
	\begin{align}
	\frac{1}{2}\langle x^* - x^*_{\mu}, & \hat{M}_f(x^* - x^*_{\mu})\rangle 
	\le F_{\mu}(x^*) - F_{\mu}(x^*_{\mu}) \nonumber \\
	&\overset{Lemma \;\ref{lemma_auxres} \; (i)}{\le} F^* - F_{\mu}(x^*_{\mu})  \overset{Lemma \; \ref{lemma_auxres} \; (ii)}{\le} \frac{\mu}{2}\mathbb{E}[\norm{g_F(x^*;\xi)}^2]. \label{moreau_qg}
	\end{align}
	From \eqref{pqg_prelim_sc} and \eqref{moreau_qg} we obtain the weak linear regularity relation:
	\begin{align*}
	F_{\mu}(x) 
	& \ge F_{\mu}^* +  \frac{1}{4} \langle x - x^*,\hat{M}_f(x - x^*)\rangle - \frac{1}{2}\langle x^* - x^*_{\mu},\hat{M}_f(x^* - x^*_{\mu})\rangle  \nonumber \\
	& \ge F_{\mu}^* +  \frac{\lambda_{\min}(\hat{M}_f)}{4} \norm{x - x^*}^2 - \frac{\mu}{2}\mathbb{E}[\norm{g_F(x^*;\xi)}^2]
	\quad \forall x \in \rset^n,
	\end{align*}
	which confirms result $(ii)$. 
	Lastly if $X$ is linearly regular then, using Lemma $\ref{lemma_auxres} \; (iii)$, \eqref{moreau_qg} transforms into:
	\begin{align}
	\frac{1}{2}\langle x^* - x^*_{\mu},\hat{M}_f(x^* - x^*_{\mu})\rangle 
	&\le F^* - F_{\mu}(x^*_{\mu}) \nonumber \\
	& \overset{Lemma \; \ref{lemma_auxres} \; (iii)}{\le} \frac{\mu}{2}\mathbb{E}[\norm{g_f(x^*;\xi)}^2] + \frac{\mu}{2\sigma_X}\norm{g_f(x^*)}^2, \label{moreau_qg2}
	\end{align}
	and following the same lines as in the previous result $(ii)$, we immediately obtain the constants from $(iii)$.
\end{proof}

\end{document}